\newtheorem{theorem}{Theorem}
\newtheorem{proposition}[theorem]{Proposition}
\newtheorem{lemma}[theorem]{Lemma}
\newtheorem{corollary}[theorem]{Corollary}
\theoremstyle{definition}
\newtheorem{definition}[theorem]{Definition}
\newtheorem{convention}[theorem]{Convention}
\newtheorem{example}[theorem]{Example}
\newtheorem{remark}[theorem]{Remark}
\newtheorem{question}[theorem]{Question}
\newcommand{\PA}{\textnormal{PA}}
\newcommand{\set}[2]{\lbrace #1 \ \mid \ #2 \rbrace }
\newcommand{\CT}{\textnormal{CT}}
\newcommand{\df}[1]{\textbf{#1}}
\newcommand{\num}[1]{\underline{#1}}
\newcommand{\ElDiag}{\textnormal{ElDiag}}
\newcommand{\val}[1]{{#1}^{\circ}}
\newcommand{\Val}{\textnormal{Val}}
\newcommand{\tuple}[1]{\langle #1 \rangle}
\newcommand{\dpt}{\textnormal{dp}}
\newcommand{\Th}{\textnormal{Th}}
\newcommand{\Lang}{\mathscr{L}}
\newcommand{\dom}{\textnormal{dom}}
\newcommand{\Con}{\textnormal{Con}}
\newcommand{\FV}{\textnormal{FV}}
\newcommand{\qcr}[1]{\ulcorner #1 \urcorner}
\newcommand{\rk}{\textnormal{rk}}
\newcommand{\initialeq}{\unlhd}
\newcommand{\initial}{\lhd}
\newcommand{\LPA}{\mathscr{L}_{\PA}}
\newcommand{\form}{\textnormal{Form}}
\newcommand{\Term}{\textnormal{Term}}
\newcommand{\Sent}{\textnormal{Sent}}
\newcommand{\ClTerm}{\textnormal{ClTerm}}
\newcommand{\ClTermSeq}{\textnormal{ClTermSeq}}
\newcommand{\VarSeq}{\textnormal{VarSeq}}
\newcommand{\Comp}{\textnormal{Comp}}
\newcommand{\Asn}{\textnormal{Asn}}
\newcommand{\Var}{\textnormal{Var}}
\newcommand{\Num}{\textnormal{Num}}
\newcommand{\SubAsn}{\textnormal{SubAsn}}
\newcommand{\Aut}{\textnormal{Aut}}
\newcommand{\tp}{\textnormal{tp}}
\title{Full satisfaction classes, definability, and automorphisms} 
\author{Bartosz Wcisło\footnote{Institute of Mathematics, Polish Academy of Sciences.}}
\begin{document}
	
	\maketitle

\begin{abstract}
	 We show that for every countable recursively saturated model $M$ of Peano Arithmetic and every subset $A \subseteq M$, there exists a full satisfaction class $S_A \subset M^2$ such that $A$ is definable  in $(M,S_A)$ without parametres. It follows that in every such model, there exists a full satisfaction class which makes every element definable and thus the expanded model is minimal and rigid. On the other hand, we show that for every full satisfaction class $S$ there are two elements which have the same arithmetical type, but exactly one of them is in $S$. In particular, the automorphism group of a model expanded with a satisfaction class is never equal to the automorphism group of the original model. The analogue of many of the results proved here for full satisfaction classes were obtained by Roman Kossak for partial inductive satisfaction	classes. However, most of the proofs relied heavily on the induction scheme in a crucial way, so recapturing the results in the setting of full satisfaction classes requires quite different arguments.
\end{abstract}	
	
\section{Introduction}

Satisfaction classes are a classical topic in the study of models of Peano Arithmetic. A full satisfaction class is essentially a function which assigns to each formula in the sense of a model a set of elements which "satisfy" that formula. We require that this assignment is compositional in that, for instance, the set of elements satisfying a conjunction $\phi \wedge \psi$ is the intersection of the sets of elements satisfying $\phi$ and $\psi$. In that way we endow nonstandard arithmetical formulae with certain "semantics." We can also weaken the requirement that the relation of satisfaction is defined for all formulae, thus obtaining a notion of a partial satisfaction class, or strengthen it in a number of interesting ways. For instance, we may require that the expansion of a model $M$ with a satisfaction class $S$ satisfies full induction, obtaining a notion of an inductive satisfaction class. 

A systematic study of satisfaction classes was initiated by Krajewski,
 preceded by a study of "nonstandard semantics" by Robinson, \cite{robinson}. Probably the most important result of the whole area was obtained by Kotlarski, Krajewski, and Lachlan in \cite{kkl}, where it was proved that in every countable recursively saturated model of $\PA$ there exists a full satisfaction class. The saturation assumption turned out to be essential, since it was subsequently shown by Lachlan in \cite{lachlan} that indeed every model with a full satisfaction class is recursively saturated.  Later, it was shown that the presence of a full satisfaction class in a model is a strictly stronger requirement than recursive saturation. In \cite{smith} it was proved by Smith that every model $M \models \PA$ with a full satisfaction class has an undefinable subset whose all bounded initial segments are coded in the model. On the other hand, by the previous work of Kaufmann, \cite{kaufmann}, we know that there are uncountable recursively saturated models in which no such subset can be found (so called rather classless models). 

Model-theoretic properties of the satisfaction classes themselves were also a subject of study. In \cite{kossak_note}, a number of results typical for models of $\PA$ were proved by Kossak for the models expanded with satisfaction classes. For instance, it was shown that in every countable recursively saturated model $M$ of $\PA$, there exists a class $S \subset M$ such that $(M,S)$ is minimal, i.e., it does not have a proper elementary submodel. It was also proved that in any countable model $M \models \PA$, there exists a family of continuum many pairwise non-elementary-equivalent satisfaction classes. The work of Kossak was later used by him in a joint paper with Schmerl, \cite{kossakschmerl_satisfaction}, to obtain new results on models of pure $\PA$. 

The findings from \cite{kossak_note} mostly applied to inductive satisfaction classes, so most of them do not typically generalise when we require that the constructed class is also full, since the presence of a full inductive satisfaction class has nontrivial implications for the theory of the underlying model (e.g., it can be verified that such a model satisfies $\Con_{\PA}$). The proofs in \cite{kossak_note} mostly relied on the induction scheme in a crucial way, so recapturing the results in the setting of full satisfaction classes requires quite different arguments.\footnote{This paper actually answers some questions of Roman Kossak, who asked during his visit in Warsaw whether certain results from that previous work also hold for full satisfaction classes.}

In this article, we investigate such full satisfaction classes with special model-theoretic properties. Among other things, we show that in a countable model of $\PA$, every set is definable (without parametres) from a full satisfaction class. This has a number of model-theoretic corollaries. For instance, we can require that every element in the model is definable from $S$ without parametres, so that all elements of $M$ become distinguishable in the presence of a full satisfaction class. On the other hand, we show that if $S$ is a full satisfaction class on $M$, we can always find a pair of sentences $\phi, \psi \in \Sent_{\LPA}(M)$ with the same arithmetical types such that $\phi$ is rendered true and $\psi$ is rendered false, so it cannot happen that all elements indistinguishable by arithmetical formulae are also indistinguishable from the perspective of a given satisfaction class.

The study of satisfaction classes closely parallels the study of truth predicates which is an active research area in philosophical logic.\footnote{A classical textbook on axiomatic truth predicates is \cite{halbach}. See also \cite{cies_ksiazka} for an overview of some more recent results in that field.} Although the results in both terminological traditions seem to go hand-in-hand, they are typically not formulated in a way that allows for a direct translation. In this article, we try to close this small gap and propose certain additional technical assumptions both on satisfaction and on truth classes that makes the correspondence completely direct. In particular, all our results apply, \textit{mutatis mutandis}, both to satisfaction classes and to truth predicates.

\section{Preliminaries}

\subsection{Arithmetic}
In this article, we will consider satisfaction classes over Peano Arithmetic, $\PA$. We will assume that the reader has some previous knowledge of $\PA$ and its models. Crucially for our purposes, Peano Arithmetic can define  Ackermann's elementhood notion $x \in_{\textnormal{Ack}} y$ defined as "$x$-th bit of the binary expansion of $y$ is one." This notion, provably in $\PA$, satisfies the axioms of the Zermelo--Fr\"ankel set theory, except for the Axiom of Infinity. This allows us to treat $\PA$ as a theory of finite sets (consequently, we will denote the relation $\in_{\textnormal{Ack}}$ simply with "$\in$"). In particular, $\PA$ is capable of handling syntactic notions such as formulae, sentences of proofs. All the  relevant background facts including formalisation of syntax can be found e.g. in \cite{kaye}, especially Chapters 1--9 or \cite{hajekpudlak}, especially Chapter I. 

Let us introduce some notation for the formalised syntactic objects. 
\begin{definition} \label{def_formalised_syntax} \
	Let $\Lang$ be a primitive recursive langauge containing $\LPA$ (i.e, $\Lang$ is countable and the function which given a G\"odel number returns the type of symbol it represents and its arity is primitive recursive).
	\begin{itemize}
		\item The formula $\Var(x)$ expresses that $x$ is (a code of) a first-oder variable.
		\item The formula $\VarSeq(x)$ expresses that $x$ is (a code of) a sequence of first-order variables. 
		\item The formula $\Term_{\Lang}(x)$ expresses that $x$ is (a code of) a term over the language $\Lang$.
		\item The formula $\ClTerm_{\Lang}(x)$ expresses that $x$ is (a code of) a closed term over the language $\Lang$.
		\item The formula $\ClTermSeq_{\Lang}(x)$ expresses that $x$ is (a code of) a sequence of closed terms over the language $\Lang$.
		\item The formula $\form_{\Lang}(x)$ expresses that $x$ is (a code of) a formula over the language $\Lang$.
		\item The formula $\Sent_{\Lang}(x)$ expresses that $x$ is (a code of) a sentence over the language $\Lang$.
		\item The formula $\FV_{\Lang}(x,y)$ expresses that $y$ is (a code of) a set of free variables of $x$, where $x$ is a term or a formula in $\Lang$. In what follows, we will omit the subscript $\Lang$ and assume that we have one formula $\FV$ which works uniformly for all the relevant languages.
		\item The formula $\Asn_{\Lang}(x,y)$ expresses that $y$ is an $x$-assignment, i.e. $x$ is (a code of) a term or a formula and $y$ is a function whose domain contains $\FV_{\Lang}(x)$. As in the case of $\FV$, we will omit the mention of the language in the formula $\Asn$. 
		\item If $\alpha, \beta$ are two assignments and $\bar{v} = \tuple{v_1,\ldots,v_c} \in \VarSeq$, by $\beta \sim_{\bar{v}} \alpha$, we mean that $\dom \beta \supseteq \dom \alpha \cup \{v_1, \ldots, v_c\}$ and $\beta(v) = \alpha(v)$ for every $v \in \dom(\alpha) \setminus \{v_1, \ldots, v_c\}.$ 
		If $\bar{v}$ is a sequence of length one of the form $\tuple{v}$, we will write $\beta \sim_{v} \alpha$.
		\item If $t \in \Term_{\LPA}$ and $\alpha \in \Asn(t)$, the formula $\Val(t,\alpha,x)$ expresses that $x$ is a formally computed value of $t$ under the assignment $\alpha$. For instance, if $t = \qcr{0 + v \times (S0 + S0)}$ and $\alpha$ is an assignment which sends the variable $v$ to $5$, then $\Val(t,\alpha,10)$ holds. We will also denote it with $t^{\alpha} =x$. In the special case when $t$ is a closed term and $\alpha$ is the empty valuation, we will use the notation $t^{\circ} = x$.
		\item By $\Num(x,y)$ we mean that $y$ is (a code of) the formal numeral $S\ldots S0$ of $\LPA$, where $S$ is iterated $x$ times. We will also denote it with $\num{x} = y$.  
		\item If $\phi \in \form_{\Lang}$ and $\alpha \in \Asn(\phi)$, then the formula $\SubAsn(\phi,\alpha,x)$  expresses that $x$ is (a code of) the sentence resulting by substituting the numeral $\num{\alpha(v)}$ for each free variable $v$ in $\phi$. We will also denote it with $\phi[\alpha] = x$. 
		\item If $\phi \in \form_{\Lang}$, then the formula $\dpt(\phi,x)$ expresses that the syntactic depth of $\phi$ is equal to $x$, where the syntactic depth is the depth of nesting of quantifiers and connectives in $\phi$ (thus counting the depth of all atomic formulae, possibly featuring very complex terms, as $0$). We will also denote it with $\dpt(\phi) = x$. 
	\end{itemize}
\end{definition}

In the article, we will make use of a number of notational conventions.

\begin{convention} \label{conv_konwencje_syntaktyczne} \
	\begin{itemize}
		\item We will often suppress formal notation referring to syntactic operations and write the results of these operations instead. For instance, we will write $S(\phi \wedge \psi,\alpha)$ rather than "For all $z$, if $z$ is the unique formula which is a conjunction of $\phi$ and $\psi$, then $S(z,\alpha)$."
		\item We will use the notions defined with functional expressions such as $\num{x}, \val{t}, \FV(\phi)$ as if they had a stand-alone meaning (as if they were terms). 
		\item We will use the expressions such as $\form_{\LPA}$ or $\Asn$ as if they were denoting sets, e.g., writing $\phi \in \form_{\LPA}$ or $\alpha \in \Asn(\phi)$ (note that in the latter case, we also make use of the previous convention).
		\item If $M \models \PA, \phi \in \form_{\LPA}(M), \bar{v} \in \VarSeq(M), Q \in \{\forall, \exists\}$, then by $Q \bar{v} \phi$,  we mean $Q v_1 \ldots Q v_c \phi$, where $\bar{v} = \tuple{v_1, \ldots, v_c}$.
	\end{itemize}
\end{convention}

Let us recall some basic definitions and facts about models of $\PA$. 

\begin{definition} \label{def_recursive_saturation_resplendence}
	We say that a model $M$ is \df{recursively saturated} if every recursive type with finitely many parametres and variables is satisfied.\footnote{Recursive type is the same thing as a computable type, i.e., a consistent set of formulae which is computable as a set of G\"odel codes. In this article, we stick to the "recursive" terminology, since it is prevalent in the area of models of arithmetic.} We say that a model $M$ is \df{resplendent} if for every recursive theory $\Th$ consistent with $\ElDiag(M)$, $M$ can be expanded to a model of $\Th$.
\end{definition}

The notion of resplendence is clearly very rich in consequences. Thanks to its link with the recursive saturation, it is also rather easy to meet in nature (see \cite{kaye}, Theorems 15.7 and 15.8):

\begin{theorem}[Barwise--Schlipf, Ressayre] \label{th_recursive_saturated_resplendent}
	Every countable recursively saturated model in a countable recursive language is resplendent. Moreover, if $M$ is a countable recursively saturated model and $\Th$ is a countable recursive theory, then we can find an expansion of $M$ to a model of $\Th$ which is itself recursively saturated.
\end{theorem}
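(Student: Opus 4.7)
The plan is to do a standard Henkin-style back-and-forth, using recursive saturation of $M$ to pull the Henkin witnesses inside $M$ itself, and simultaneously interleaving tasks that will guarantee recursive saturation of the expansion. Adjoin a constant $c_m$ for each $m \in M$, and enumerate (a) all $\Lang^+(M)$-sentences as $(\phi_n)_{n<\omega}$, and (b) all pairs $(p_j,\bar{a}_j)$ where $p_j$ is a recursive $\Lang^+$-type in one free variable (with a tuple of parameter slots) and $\bar{a}_j$ is a tuple from $M$. Then build a chain $A_0 \subseteq A_1 \subseteq \ldots$ of finite sets of $\Lang^+(M)$-sentences maintaining the invariant that $T \cup \ElDiag(M) \cup A_n$ is consistent. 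At the step for $\phi_n$, add $\phi_n$ or $\neg\phi_n$, whichever preserves consistency; if $\phi_n = \exists x\, \psi(x)$ was added, additionally add $\psi(c_b)$ for some $b \in M$ to be chosen. At the step for $(p_j,\bar{a}_j)$, provided $T \cup \ElDiag(M) \cup A_n \cup p_j(x,\bar{a}_j)$ is consistent (with $x$ treated as a new constant), add all sentences $p_j(c_b,\bar{a}_j)$ for some $b \in M$ to be chosen.

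The central step is choosing $b \in M$. In the Henkin case, the conditions required of $b$ are exactly: for every $\Lang$-formula $\theta(x,\bar{y})$ such that $T \cup A_n(\bar{a}) \cup \{\psi(x,\bar{a})\} \vdash \neg\theta(x,\bar{a})$ (reading $\bar{a}$ as free variables), one has $M \models \neg\theta(b,\bar{a})$. Since $T$ is recursive and $A_n$ is finite, this collection of conditions is recursively enumerable, hence equivalent to a recursive $\Lang$-type $q(x,\bar{a})$. Finite satisfiability of $q(x,\bar{a})$ in $M$ follows from the consistency invariant: any model $(N,\bar R) \models T \cup \ElDiag(M) \cup A_n$ contains a witness $d$ for $\psi$ that simultaneously refutes any prescribed finite batch of the $\theta_i$, and the resulting $\Lang$-sentence $\exists x \bigwedge_i \neg\theta_i(x,\bar{a})$ transfers to $M$ because $N \succeq M$ in $\Lang$. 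Recursive saturation of $M$ then supplies $b \in M$ realizing $q$. The saturation-task case is entirely analogous, with the resulting type on $b$ encoding both the Henkin constraint and the consistency of $p_j$ at $b$.

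Defining $R_i^M$ from which $R_i$-atomic sentences appear in $\bigcup_n A_n$ gives an expansion $(M,\bar R)$ satisfying $T$. For the ``moreover'' part, suppose $p(x,\bar{a})$ is a recursive $\Lang^+$-type finitely satisfied in $(M,\bar R)$. Then $(M,\bar R) \models T \cup \ElDiag(M) \cup \bigcup_n A_n$, so $p(x,\bar{a})$ is consistent with our invariant, hence at the unique stage dedicated to $(p,\bar{a})$ the construction added $p(c_b,\bar{a})$ for some $b \in M$; thus $p$ is realized in the expansion, giving recursive saturation.

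I expect the only real obstacle to be the witness step, specifically the twin checks that (i) the relevant set of constraints on $b$ is genuinely r.e. (requiring honest bookkeeping of which parameters are free variables in the provability predicate and which are Henkin constants), and (ii) finite satisfiability of this set in $M$ is a consequence of the induction hypothesis via elementarity, not of some stronger saturation-like property. Everything else is routine Henkin completion.
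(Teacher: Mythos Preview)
The paper does not prove this theorem; it merely cites \cite{kaye}, Theorems 15.7 and 15.8, as background. Your proposal is the standard Henkin-plus-type-realization argument that appears there, and the strategy and the two ``central'' checks you isolate (r.e.\ constraints on $b$ via Craig's trick, finite satisfiability via $N \succeq M$ in $\Lang$) are correct.

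One bookkeeping slip worth fixing: you declare the $A_n$ to be \emph{finite}, but at a saturation step you ``add all sentences $p_j(c_b,\bar a_j)$,'' an infinite set. Either relax the invariant to ``each $A_n$ is recursive'' (your r.e.\ argument for the witness type $q$ survives unchanged, since provability from a recursive theory is still r.e.), or keep the $A_n$ finite by enumerating triples $(p_j,\bar a_j,k)$ and adding only the $k$th formula of $p_j$ at each such step, committing to a single witness $b_j$ the first time the pair $(p_j,\bar a_j)$ is visited. Either repair is routine and does not affect the rest of the argument.
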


Another important feature of recursively saturated models is that they allow very powerful back-and-forth constructions which are very rich in structural consequences.
\begin{definition}
	Let $M \models \PA$. By the \df{standard system} of $M$, we mean the family of $X \subseteq \omega$ such that there exists $c \in M$ for which $X = \set{n \in \omega}{M \models n \in_{\textnormal{Ack}} c}.$
\end{definition}

\begin{theorem}[Wilmers] \label{th_classification}
	Suppose that $M, N \models \PA$ are recursively saturated models. Then $M \simeq N$ iff $M \equiv N$ and $M,N$ have the same standard systems.
\end{theorem}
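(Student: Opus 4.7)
The forward direction is essentially automatic: any isomorphism $\sigma : M \to N$ preserves the truth of all first-order sentences, giving $M \equiv N$, and pointwise fixes the standard cut (each natural number is the value of its numeral). Hence $X \subseteq \omega$ is coded by $c \in M$ if and only if it is coded by $\sigma(c) \in N$, so $\textnormal{SSy}(M) = \textnormal{SSy}(N)$.

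For the converse I would run a standard back-and-forth construction, which requires that $M$ and $N$ be countable (and I read the statement as implicitly assuming this, since this is how Wilmers' theorem is usually phrased). Enumerate $M = \{a_n : n \in \omega\}$ and $N = \{b_n : n \in \omega\}$ and build a chain of finite partial elementary maps $f_0 \subseteq f_1 \subseteq \cdots$ with $f_k : \bar{a}^{(k)} \mapsto \bar{c}^{(k)}$. At even stages extend the domain to absorb the next element of $M$, at odd stages extend the range to absorb the next element of $N$; the union is then an isomorphism. Everything reduces to the following extension step: given a partial elementary $f : \bar{a} \mapsto \bar{c}$ and $d \in M$, produce $d' \in N$ for which $f \cup \{(d, d')\}$ is still partial elementary.

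To handle the extension step, fix a primitive-recursive enumeration $(\phi_n(\bar{v}, y))_{n \in \omega}$ of arithmetical formulas in the appropriate free variables and set $X = \{n \in \omega : M \models \phi_n(\bar{a}, d)\}$. The key lemma is that $X \in \textnormal{SSy}(M)$: in any countable recursively saturated model of $\PA$, the arithmetical type of any element over any finite tuple is coded. This is proved by an overspill argument, using that the $\Sigma_n$-definable partial satisfaction predicates together with recursive saturation yield a coded partial satisfaction class for formulas of some nonstandard depth, whose restriction to standard formulas codes precisely $X$. Since $\textnormal{SSy}(M) = \textnormal{SSy}(N)$, pick $c \in N$ coding $X$ and for each $k \in \omega$ set $\psi_k(y) := \bigwedge_{n \leq k} (\num{n} \in c \rightarrow \phi_n(\bar{c}, y))$, a finite conjunction, hence a single formula in the parameters $\bar{c}, c$. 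The type $\{\psi_k(y) : k \in \omega\}$ is recursive in the parameters, and finitely consistent in $N$: for each $k$ the sentence $\exists y\, \psi_k(y)$ unpacks, using that $c$ codes $X$ and that $n \leq k$ is standard, to $\exists y \bigwedge_{n \in X \cap [0, k]} \phi_n(\bar{c}, y)$, which by partial elementarity of $f$ is equivalent to the corresponding sentence with $\bar{a}$ in $M$, witnessed there by $d$. Recursive saturation of $N$ furnishes a realization $d' \in N$, and the completeness of $X$ (for each $\phi$ exactly one of $\phi$ or $\neg \phi$ belongs to the type) forces $\tp^N(d' / \bar{c}) = X = \tp^M(d / \bar{a})$, as required.

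The main obstacle is the lemma that $\tp^M(d / \bar{a}) \in \textnormal{SSy}(M)$ for countable recursively saturated $M$. This is the only genuinely substantial use of recursive saturation of $M$ (the saturation of $N$ enters separately at the realization step), and it is where the assumption is unavoidable; the rest of the argument is routine back-and-forth book-keeping. I would accordingly isolate this fact as a self-contained lemma before starting the construction.
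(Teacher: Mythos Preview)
The paper does not actually prove this theorem; it cites Wilmers' thesis and refers the reader to Kaye, Theorem 15.2.3, for a proof in greater generality. Your argument is the standard back-and-forth proof one finds there, and it is correct. You are also right to flag that countability is needed and is implicit in the statement: the converse direction is false in general for uncountable models, and the paper only ever applies the theorem (and its corollary on automorphisms) to countable recursively saturated models.

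One small simplification: your justification of the key lemma that $\tp^M(d/\bar{a})$ lies in the standard system is more elaborate than necessary. Rather than invoking partial satisfaction predicates and overspill, you can argue directly from recursive saturation. The recursive type
\[
p(z) \;=\; \bigl\{\, \num{n} \in z \ \leftrightarrow\ \phi_n(\bar{a},d) \ :\ n \in \omega \,\bigr\}
\]
over the parameters $\bar{a},d$ is finitely satisfiable in $M$ (any finite fragment is realised by a standard code), so recursive saturation yields a realisation $c \in M$, and this $c$ codes the type. This is exactly the argument in Kaye, and it isolates the lemma cleanly without any detour through nonstandard satisfaction.
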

The result was first obtained by Wilmers in his PhD thesis \cite{wilmers}. The proof may be found in \cite{kaye}, Theorem 15.2.3, where it is stated in  greater generality (for so called rich theories). For us, the following corollary will be relevant.\footnote{Strictly speaking, the corollary does not follow from Theorem \ref{th_classification} as stated. However, it readily follows from its proof or from  a more general statement, like Theorem 15.2.3 from \cite{kaye}.}
\begin{theorem} \label{th_recursive_saturated_automorphism}
	Let $M \models \PA$ be a recursively saturated model. Suppose that $a,b \in M$ have the same types. Then there exists an automorphism $f \in \Aut(M)$ such that $f(a) =b$.
\end{theorem}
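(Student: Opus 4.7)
The plan is a standard back-and-forth construction, in the setting (consistent with Theorem 15.2.3 in \cite{kaye} alluded to in the footnote) where $M$ is countable. I fix an enumeration $M = \{m_0, m_1, \ldots\}$ and build sequences of tuples $\bar{a}_0 \subseteq \bar{a}_1 \subseteq \ldots$ and $\bar{b}_0 \subseteq \bar{b}_1 \subseteq \ldots$, starting from $\bar{a}_0 := (a)$ and $\bar{b}_0 := (b)$, maintaining $\tp(\bar{a}_n) = \tp(\bar{b}_n)$ at every stage and arranging by the usual alternation that every element of $M$ eventually enters both sequences. The union of the partial maps $\bar{a}_n \mapsto \bar{b}_n$ will then be the desired automorphism sending $a$ to $b$.

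Everything reduces to one extension lemma: given tuples $\bar{a}, \bar{b}$ of equal length in $M$ with $\tp(\bar{a}) = \tp(\bar{b})$, and given $c \in M$, there exists $d \in M$ such that $\tp(\bar{a}c) = \tp(\bar{b}d)$. To prove this I will apply recursive saturation to
\[
  q(y) := \bigl\{ \phi(\bar{b}, y) \leftrightarrow \phi(\bar{a}, c) \,:\, \phi \in \form_{\Lang} \bigr\},
\]
regarded as a type in the single variable $y$ over the parameters $\bar{a}, \bar{b}, c$. Although the types $\tp(\bar{a}c)$ and $\tp(\bar{b}d)$ are typically non-recursive, the indexing of $q$ runs uniformly over $\phi \in \form_{\Lang}$ and the substitution of fixed parameters is a primitive-recursive operation on codes, so $q$ is a recursive set of formulas in parameters.

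For finite satisfiability, given $\phi_1, \ldots, \phi_k \in \form_{\Lang}$, I will let $\psi(\bar{x}, y)$ be the conjunction whose $i$-th conjunct is $\phi_i(\bar{x}, y)$ or $\neg\phi_i(\bar{x}, y)$ according to whether $M \models \phi_i(\bar{a}, c)$. By construction $M \models \psi(\bar{a}, c)$, hence $M \models \exists y\, \psi(\bar{a}, y)$; since $\tp(\bar{a}) = \tp(\bar{b})$, this single sentence in the parameter $\bar{a}$ transfers to $M \models \exists y\, \psi(\bar{b}, y)$, witnessing satisfiability of the corresponding finite fragment of $q$. Recursive saturation then produces some $d \in M$ realizing $q$, and such $d$ clearly satisfies $\tp(\bar{a}c) = \tp(\bar{b}d)$. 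The back-and-forth now proceeds by applying the lemma alternately in each direction (the setup is symmetric in $\bar{a}, \bar{b}$).

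The main conceptual point---and, I expect, the only nonroutine step---is recognising that the correct object to feed to recursive saturation is the biconditional type $q$ rather than the individual types $\tp(\bar{a}c), \tp(\bar{b}d)$; once this observation is in place, recursiveness, finite satisfiability, and the back-and-forth are all straightforward.
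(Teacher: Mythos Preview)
Your argument is correct. The paper does not actually supply a proof of this theorem: it quotes it as a known corollary of Wilmers' classification result and defers to \cite{kaye}, Theorem 15.2.3, for details. Your back-and-forth construction is precisely the standard argument behind that reference, and you have correctly noted the implicit countability assumption (without which the statement would in fact fail). The trick of using the recursive biconditional type $q(y)$ rather than the non-recursive type $\tp(\bar{a}c)$ itself is exactly the right move; an equivalent and equally common variant encodes $\tp(\bar{a}c)$ by an element of $M$ (using that realised types are coded in recursively saturated models of $\PA$) and then uses that code as an additional parameter, but your formulation is arguably cleaner.
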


\subsection{Satisfaction classes}

In the literature, there are two concurrent terminological traditions. One of them speaks of satisfaction classes and is more widespread in the study of models of arithmetic. The other tradition, which stems from philosophical logic, tends to formulate results in terms of truth predicates. In deference to \cite{kossak_note}, we will follow the former tradition. However, to make our results easily translatable to the language of truth predicates, we will make sure that  all satisfaction classes which we construct satisfy certain additional regularity properties.

\begin{definition} \label{def_comp}
	Let $M \models \PA$ and let $\phi \in \form_{\LPA}(M)$. Let $S \subset M^2$. We say that $S$ satisfies \df{compositional conditions} for $\phi$, $\Comp(\phi)$ if the disjunction of the following clauses holds in $(M,S)$:
	\begin{enumerate}
		\item $\exists s,t \in \Term_{\LPA} \ \Big(\phi = (s=t) \wedge \forall \alpha \in \Asn(\phi) \ \Big(S(\phi,\alpha) \equiv s^{\alpha} = t^{\alpha} \Big) \Big).$
		\item $\exists \psi \in \form_{\LPA} \ \Big(\phi = (\neg \psi) \wedge \forall \alpha \in \Asn(\phi) \ \Big(S(\phi,\alpha) \equiv \neg S(\psi,\alpha)\Big)\Big).$
		\item $\exists \psi, \eta \in \form_{\LPA}(M) \ \Big(\phi = (\psi \vee \eta) \wedge \forall \alpha \in \Asn(\phi) \Big(S(\phi,\alpha) \equiv S(\psi,\alpha) \vee S(\eta,\alpha) \Big)\Big).$
		\item $\exists \psi, \eta \in \form_{\LPA}(M) \ \Big(\phi = (\psi \wedge \eta) \wedge \forall \alpha \in \Asn(\phi) \Big(S(\phi,\alpha) \equiv S(\psi,\alpha) \wedge S(\eta,\alpha) \Big)\Big).$
		\item  $\exists \psi \in \form_{\LPA}(M) \exists v \in \Var \ \Big(\phi = (\exists v \psi) \wedge \forall \alpha \in \Asn(\phi) \Big(S(\phi,\alpha) \equiv \exists \beta \sim_v \alpha S(\psi, \beta)\Big)\Big).$  
		\item  $\exists \psi \in \form_{\LPA}(M) \exists v \in \Var \ \Big(\phi = (\forall v \psi) \wedge \forall \alpha \in \Asn(\phi) \Big(S(\phi,\alpha) \equiv \forall \beta \sim_v \alpha \ S(\psi, \beta)\Big)\Big).$  
 	\end{enumerate}  
\end{definition}

\begin{definition} \label{def_satisfaction_class}
	Let $M \models \PA$. We say that a set $S \subset M^2$ is a  \df{satisfaction class} if  there exists $D \subset  \form_{\LPA}(M)$ such that:
	\begin{itemize}
		\item If $(\phi,\alpha) \in S$, then $\phi \in \form_{\LPA}(M), \alpha \in \Asn(\phi)$.
		\item If there exists $\alpha$ such that $(\phi,\alpha) \in S$, then $(M,S) \models \Comp(\phi)$.
		\item If there exists $\alpha$ such that $(\phi,\alpha) \in S$, then either $\phi \in D$ or there exists $\psi$ such that $\phi = \neg \psi$ and $\psi \in D$. 
		\item  $(M,S)$ satisfies $\Comp(\phi)$ for every $\phi \in D$.
		\item If $\psi$ is a direct subformula of $\phi$ and $\phi \in D$, then $\psi \in D$. 
		\item If $\phi \in D$, then for every $\alpha \in \Asn(\phi)$ either $(\phi,\alpha) \in S$ or $(\neg \phi,\alpha) \in S$. 
	\end{itemize}
	 By a \df{domain of} $S$, $\dom(S)$, we mean the minimal set $D \subset \form_{\LPA}(M)$ satisfying the above conditions.\footnote{Without the minimality requirement the domain is "almost" unique, since it not determined how to treat the formulae which are negations satisfied under all assignments. If $(\neg \phi, \alpha) \in S$ for all $\alpha$ and $(\neg \neg \neg \phi, \alpha) \notin S$ for any $\alpha$, then we stipulate that $\phi$ is in the domain of $S$, but $\neg \phi$ is not.}
	We say that a satisfaction class $S$ \df{decides} a formula $\phi$ if for every $\alpha \in \Val(\phi)$ either $(\phi,\alpha) \in S$ or $(\neg \phi,\alpha) \in S$.
	 We say that $S$ is a \df{full satisfaction class} if $\dom(S) = M$, i.e., $(M,S)$ satisfies $\Comp(\phi)$ for every $\phi \in \form_{\LPA}(M)$. We call a satisfaction class \df{partial} if it is not full. 
	 A (full or partial) satisfaction class $S$ is \df{inductive} iff $(M,S)$ satisfies the full induction scheme.
\end{definition}

\begin{remark}
	Our definition of a satisfaction class agrees with the usual one for full satisfaction classes and essentially agrees with the usual one for satisfaction classes defined on the formulae in some initial segment of a model. 
	
	According to the definition proposed above, a satisfaction class is required to be completely determined about any formula to which it ascribes any assignments. If $(\phi,\alpha) \in S$, then we fully know for any $\beta \in \Asn(\phi)$ whether $(\phi,\beta)$ holds or not and the same holds for the direct subformulae of $\phi$.

	Notice that our definition is not very restrictive. Let us introduce a notion of a pre-satisfaction class: $S$ is a pre-satisfaction class if it satisfies compositional conditions on a set of formulae $D$ closed under direct subformulae. Then given such an $S$, we can define a satisfaction class $S'$ such that $S' \cap (D \times M) = S \cap (D \times M)$. Indeed, let $(\phi,\alpha) \in S'$ if either $\phi \in D$ and $(\phi,\alpha) \in S$ or $\phi = \neg \psi$, $\psi \in D$ and $(\psi,\alpha) \notin S$. One can verify that $S'$ defined in such a way is a satisfaction class in our sense, so we do not lose very much by imposing the "definiteness" conditions.
\end{remark}

\begin{remark}
	In Definition \ref{def_satisfaction_class}, we required that satisfaction classes are defined over the arithmetical language. Of course, nothing prevents us from defining them over an arbitrary (say, primitive recursive) language $\Lang$. In such a case, we have to add the atomic clauses for the new atomic formulae of $\Lang$ to the condition $\Comp(\phi)$ and replace every mention of $\form_{\LPA}$ with $\form_{\Lang}$.
\end{remark}

As we have already remarked, there are two concurrent terminological traditions which speak of essentially the same objects: satisfaction classes and truth predicates. Unfortunately, these two notions do not translate directly into each other. Indeed, suppose that $M \models \PA$ and $S$ is a full satisfaction class on $M$. Then we would like to define a truth predicate $T$ as follows:
\begin{displaymath}
T = \set{\phi[\alpha] \in \Sent_{\LPA}(M)}{(\phi,\alpha) \in S}.
\end{displaymath}
However, defined in such a way, $T$ is not necessarily a truth predicate.\footnote{We will shortly define what we actually mean by a truth predicate. For the time being, the reader should think of it as a set of sentences in the sense of a model which satisfies compositional conditions similar to that of a satisfaction class, where we  require that true quantified statements are witnessed by (certain) terms.} Indeed, take a formula $\phi(v) \in \form_{\LPA}(M)$ with only one free variable $v$ and suppose that $(\phi,\alpha) \in S$, where $\alpha$ sends $v$ to $0$. Then, by definition $T(\phi(0))$ holds. However, notice that nothing in the definition of the satisfaction class guarantees that $(\phi(0),\emptyset) \in S$. In fact, one can construct full satisfaction classes such that both $(\phi,\alpha)$ and $(\neg \phi(0), \emptyset)$ belong to  $S$. Such a class would yield $T(\neg \phi(0))$, contradicting compositionality.\footnote{Such a construction could be performed by an application of the Enayat--Visser technique which we discuss in this article.} 

A familiar definition which says that a sentence is true iff it is satisfied under any assignment works even worse. Again, we can construct models of $\PA$ with full satisfaction classes, $(M,S)$, where for some sentence $\phi \in \Sent_{\LPA}(M)$  neither $(\phi,\alpha) \in S$  nor $(\neg \phi,\alpha) \in S$ holds for all $\alpha$ (that is, the satisfaction predicate may depend nontrivially on the assignments, even though a formula in question has no free variables). A truth predicate obtained from such a model with this classical definition would not even satisfy the compositional condition for the negation.

The question of defining a satisfaction class out of a truth class is slightly subtler and depends on what exact compositional conditions for the quantifiers are accepted. One of the common choices requires that quantified sentences are true if they are witnessed by some term. Thus the compositional axiom for the existential quantifier looks as follows:
\begin{displaymath}
T \exists v \phi \equiv \exists t \in \ClTerm_{\LPA} \ T\phi(t).
\end{displaymath}
If we accept this kind of axioms and set, for instance,
\begin{displaymath}
S = \set{(\phi,\alpha) \in M^2}{\phi[\alpha] \in T}.
\end{displaymath}
then in general we cannot show that quantifier axioms for the satisfaction class will hold, since it could happen for a nonstandard $\phi \in \form_{\LPA}(M)$ that $\exists v \phi \in T, \phi(0+0) \in T$ and for all $x\in M$, $\phi(\num{x}) \notin T$. Then the quantifier axiom for $S$ would break down as well.  (Recall that in $\phi[\alpha]$ we substitute for any free variable $v$ the \emph{numeral} whose value is $\alpha(v)$.)  Whether a satisfaction class can define a truth predicate with some other, possibly much more complex definition (or vice versa), seems to be a rather subtle question, but we will not pursue it any further in this article.

In order to make the statements about truth predicates and satisfaction classes directly translatable between the two settings, we will impose some additional technical conditions on the constructed classes. This will not be important for any results in this article (i.e., they still hold if we do not make any of these regularity assumptions). 

\begin{definition} \label{defi_extensional_equivalence}
	Suppose that $\phi, \psi$ are arithmetical formulae. Let $\alpha \in \Asn(\phi), \beta \in \Asn(\psi)$. We say that the pairs $(\phi,\alpha), (\psi,\beta)$ are \df{extensionally equivalent} if there exist two sequences of closed terms $\bar{s}, \bar{t}$ such that for each $i$, $\val{t_i} = \val{s_i}$, and a formula $\eta$ such that $\phi[\alpha] = \eta(\bar{s}), \psi[\beta] = \eta(\bar{t})$. If $(\phi,\alpha), (\psi,\beta)$ are extensionally equivalent, we denote it with $(\phi,\alpha) \simeq (\psi,\beta)$. 
\end{definition}

\begin{definition} \label{defi_regular_satisfaction}
Let $M \models \PA$ and let $S \subset M^2$ be a (full or partial) satisfaction class. We say that $M$ is \df{regular} if for any extensionally equivalent pairs $(\phi,\alpha) \simeq (\psi,\beta)$, $(\phi,\alpha) \in S$ iff $(\psi,\beta) \in S$. 
\end{definition}

Regular satisfaction classes are in one-to-one correspondence with compositional truth predicates satisfying regularity assumptions, so our results extend to that setting. Let us state this in a more precise manner.

\begin{definition} \label{def_ctminus}
	By $\CT^-$ (compositional truth) we mean an axiomatic theory in the arithmetical language with a fresh predicate $T$ extending $\PA$ with the following axioms:
	\begin{enumerate}
		\item $\forall x \ \Big(T(x) \rightarrow x \in \Sent_{\LPA} \Big).$
		\item $\forall s,t \in \ClTerm_{\LPA} \ \Big(T(s=t) \equiv \val{s} = \val{t}\Big).$
		\item $\forall \phi \in \Sent_{\LPA} \ \Big(T \neg \phi \equiv \neg T \phi \Big).$
		\item $\forall \phi, \psi \in \Sent_{\LPA} \ \Big(T (\phi \vee \psi) \equiv T\phi \vee T \psi )\Big).$
		\item $\forall \phi, \psi \in \Sent_{\LPA} \ \Big(T (\phi \wedge \psi) \equiv T\phi \wedge T \psi )\Big).$
		\item $\forall \phi \in \Sent_{\LPA} \forall \psi \in \form_{\LPA} \forall v \in \Var \ \Big( \phi = (\exists v \psi) \rightarrow \Big(T \phi \equiv \exists x \ T \phi(\num{x})\Big) \Big).$
		\item $\forall \phi \in \Sent_{\LPA} \forall \psi \in \form_{\LPA} \forall v \in \Var \ \Big( \phi = (\forall v \psi) \rightarrow \Big(T \phi \equiv \forall x \ T \phi(\num{x})\Big) \Big).$
		\item $\forall \phi, \psi \in \form_{\LPA} \forall \bar{s}, \bar{t} \in \ClTermSeq \Big(\bar{\val{s}} = \bar{\val{t}} \rightarrow T \phi(\bar{s}) \equiv T \phi (\bar{t})\Big).$
	\end{enumerate}
\end{definition}

Essentially, in our definitions we stipulate that both satisfaction classes and truth predicates are not sensitive to specific terms, but only see their values. The following claim can be now easily verified. 
\begin{proposition} \label{prop_satisfaction_vs_truth}
	Let $M \models \PA$ and let $S \subset M^2$ be a regular satisfaction class. Let $T := \set{\phi \in \Sent_{\LPA}(M)}{(\phi[\alpha],\emptyset) \in S}$. Then $(M,T) \models \CT^-$. 	Conversely, let $(M,T) \models \CT^-$. Let $S = \set{(\phi,\alpha) \in M^2}{\phi[\alpha] \in T}.$ Then $S$ is a regular satisfaction class.
\end{proposition}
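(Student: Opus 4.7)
The plan is to verify both implications by a direct case analysis on the compositional axioms, with regularity bridging the gap between assignments (native to $S$) and numeral substitutions (native to $T$). I expect the argument to be a routine translation between the two formalisms, with one mildly nontrivial observation: for a regular satisfaction class $S$, membership of $(\psi,\beta)$ in $S$ depends only on $\beta \res \FV(\psi)$, since extending or altering $\beta$ outside $\FV(\psi)$ produces an extensionally equivalent pair. I will use this freely below.

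For the first direction, let $S$ be a regular satisfaction class; since the $\phi$ in the definition of $T$ is already a sentence, the intended reading is $T = \{\phi \in \Sent_{\LPA}(M) : (\phi,\emptyset) \in S\}$. Axiom 1 of $\CT^-$ is immediate, and axioms 2--5 (the atomic and propositional clauses) follow by applying the corresponding clauses of $\Comp(\phi)$ to the empty assignment. Axiom 8 is immediate from regularity, since $(\phi(\bar{s}),\emptyset) \simeq (\phi(\bar{t}),\emptyset)$ whenever $\val{s_i} = \val{t_i}$ for every $i$. The only axioms requiring attention are the quantifier axioms 6 and 7: for a sentence $\phi = \exists v\, \psi$, clause 5 of $\Comp$ yields $(\phi,\emptyset) \in S$ iff there is $\beta \sim_v \emptyset$ with $(\psi,\beta) \in S$. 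By the observation above I may take $\dom \beta = \{v\}$ and write $\beta(v) = x$; then $\psi[\beta] = \psi(\num{x})$ as sentences, so $(\psi,\beta) \simeq (\psi(\num{x}),\emptyset)$ and regularity gives $(\psi,\beta) \in S$ iff $\psi(\num{x}) \in T$. The universal case is dual.

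For the converse, let $(M,T) \models \CT^-$ and set $S = \{(\phi,\alpha) \in M^2 : \alpha \in \Asn(\phi) \wedge \phi[\alpha] \in T\}$. I verify that $(M,S) \models \Comp(\phi)$ for every $\phi \in \form_{\LPA}(M)$, so that $\dom(S) = \form_{\LPA}(M)$ and $S$ is a full satisfaction class. The atomic and propositional clauses come directly from axioms 2--5 of $\CT^-$, using the commutation of substitution with connectives such as $(\neg\psi)[\alpha] = \neg(\psi[\alpha])$. For the existential clause, if $\phi = \exists v\, \psi$, then $\phi[\alpha] = \exists v\, \chi$ where $\chi = \psi[\alpha \res (\FV(\psi) \setminus \{v\})]$; axiom 6 of $\CT^-$ gives $\phi[\alpha] \in T$ iff $\exists x\, \chi[v := \num{x}] \in T$, and since $\chi[v := \num{x}] = \psi[\beta]$ for any $\beta \sim_v \alpha$ with $\beta(v) = x$ (using once more that $(\psi,\beta) \in S$ depends only on $\beta \res \FV(\psi)$), this is precisely $\exists \beta \sim_v \alpha\, (\psi,\beta) \in S$. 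The universal case is dual, and regularity of $S$ is immediate from axiom 8.

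The one genuinely delicate bookkeeping step is, as mentioned, the fact that $S$-membership depends only on assignment values on free variables, which is what legitimises passing between ``$\exists \beta \sim_v \alpha$'' and ``$\exists x$'' in the quantifier clauses; all other steps are a direct unpacking of the definitions. Beyond this I do not anticipate any substantive obstacles.
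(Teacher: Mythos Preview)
Your proposal is correct and is precisely the routine verification the paper has in mind: the paper gives no proof beyond the remark that the claim ``can be now easily verified,'' and your clause-by-clause translation (with the key observation that regularity makes $S$-membership depend only on $\beta \res \FV(\psi)$) is exactly the expected argument.
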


We can define a number of other natural regularity properties of satisfaction or truth classes. Let us introduce one of them which will play a prominent role in our paper. 

\begin{definition} \label{defi_quantifier correctness}
	Let $M \models \PA$, let $S$ be a satisfaction class, and let $\phi \in \form_{\LPA}(M)$. We say that the \df{quantifier correctness} property holds for $\phi$ if for all $c \in M$ and all $\bar{v} \in \VarSeq(M)$ such that $\exists \bar{v} \phi$ is in the domain of $S$, 
	\begin{displaymath}
	S(\exists \bar{v} \phi,\alpha) \equiv \exists \beta \sim_{\bar{v}} \alpha \ S(\phi,\beta);
	\end{displaymath} 
	and for all $\bar{v} \in \VarSeq(M)$ such that $\forall \bar{v} \phi$ is in the domain of $S$, 
	\begin{displaymath}
	S(\forall \bar{v} \phi,\alpha) \equiv \forall \beta \sim_{\bar{v}} \alpha \  S(\phi,\beta).
	\end{displaymath}
	
	We say that the quantifier correctness \df{fails} for $\phi$ if one of the above equivalences does not hold for a pair of formulae $\phi, \exists \bar {v} \phi$ or $\phi, \forall \bar{v} \phi$ which are both in $\dom(S)$.
\end{definition}
In other words, quantifier correctness means that $\phi$ satisfies compositional clauses with respect to blocks of quantifiers. If $S$ is a full satisfaction class, then the quantifier correctness fails for $\phi$ iff it does not hold for $\phi$. In the case of partial satisfaction classes the distinction is nontrivial. For instance, suppose that $M \models \PA$ is a nonstandard model and $S \subset M^2$ contains only standard arithmetical formulae (on which all satisfaction classes agree). Then the quantifier correctness will not hold in general, since $S$ does not contain e.g. $\exists v_1 \ldots \exists v_c 0=0$ for nonstandardly  long blocs of quantifiers. On the other hand, this is not yet a failure of the quantifier correctness for which we require that for some $\alpha$, $S(\neg \exists v_1 \ldots \exists v_c 0=0, \alpha)$ holds.

\section{Enayat--Visser Lemma}

In the article \cite{enayatvisser2} (see also \cite{enayat_visser_long}), a method for constructing satisfaction classes was introduced which proved to be extremely handy, as it allows us to guarantee that the constructed classes enjoy a number of additional properties. 

However, these applications typically require slight modifications of the original proof in a way which prevents us from simply quoting the result. One attempt to present Enayat--Visser construction in a form of a more general statement has been made by James Schmerl in \cite{schmerl_kernels}. Unfortunately, his formulation, while very elegant, seems (at least to us) rather unsuitable for the typical applications of the discussed technique. In this section, we try to present the core of Enayat--Visser argument in a way that allows us to use it as a (simple) black box.

\begin{definition} \label{def_locally_admissible_properties}
	By an \df{Enayat--Visser class} (or EV class) we mean a class $\mathscr{C}$ of pairs $(M,S)$, where $M$ is a model of some theory $T$ in a language extending $\Lang$ and $S \subset M^2$ is an $\Lang$-satisfaction class such that the following conditions are satisfied:
	\begin{enumerate}
		\item For every $(M,S) \in \mathscr{C}$, there exists $(M',S') \in \mathscr{C}$ such that $M' \succeq M$, $S' \supseteq S$, and the the domain of $S'$ contains $\form_{\Lang}(M)$. (Unboundedness.)
		\item If $(M_i,S_i) \in \mathscr{C}$ for $i < \omega$ and for each $i$, $M_i \preceq M_{i+1}, S_i \subset S_{i+1}$, then $(M',S') \in \mathscr{C}$, where $M' = \bigcup_{i < \omega} M_i, S' = \bigcup_{i < \omega} S_i$. (Closure.)
	\end{enumerate}
\end{definition}

In the paper, we will only focus on the case where $\Lang = \LPA$ is the arithmetical language. Note that in the definition we do not assume that $\mathscr{C}$ is nonempty. However, it will be implied by the assumptions of the following theorem, which encapsulates the core of the Enayat--Visser argument.

\begin{theorem}[Enayat--Visser Lemma] \label{th_enayat_visser_lemma}
	Let $\mathscr{C}$ be an Enayat--Visser class, let $M \models T$, and suppose that there exists $S \subset M^2$ such that $(M,S) \in \mathscr{C}$. Then there exists a model $M' \succeq M$ and a full $\mathscr{L}$-satisfaction class $S' \supset S $ such that $(M',S') \in \mathscr{C}$. 
\end{theorem}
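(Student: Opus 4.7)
The plan is a standard alternating chain construction that iterates the two defining properties of an Enayat--Visser class. Starting from $(M_0,S_0) := (M,S) \in \mathscr{C}$, I will inductively build an elementary chain $\bigl( (M_n,S_n) \bigr)_{n<\omega}$ with $(M_n,S_n) \in \mathscr{C}$, as follows: at stage $n$, apply the Unboundedness axiom to $(M_n,S_n)$ to obtain $(M_{n+1},S_{n+1}) \in \mathscr{C}$ with $M_{n+1} \succeq M_n$, $S_{n+1} \supseteq S_n$, and, crucially, $\dom(S_{n+1}) \supseteq \form_{\Lang}(M_n)$.

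Having built the chain, set $M' := \bigcup_{n<\omega} M_n$ and $S' := \bigcup_{n<\omega} S_n$. By Closure, $(M',S') \in \mathscr{C}$, so in particular $S'$ is already an $\Lang$-satisfaction class on $M'$; moreover $M' \succeq M_0 = M$ by the elementary chain property, and $S' \supseteq S_0 = S$ by construction.

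It remains to verify that $S'$ is \emph{full}, i.e., $\dom(S') = \form_{\Lang}(M')$. So let $\phi \in \form_{\Lang}(M')$ be arbitrary. Since $M' = \bigcup_n M_n$ as a set, there is some $n$ with $\phi \in M_n$, and since ``being a formula'' is a $\PA$-definable condition preserved (and reflected) by elementary extensions, $\phi \in \form_{\Lang}(M_n)$. Unboundedness applied at stage $n$ then guarantees $\phi \in \dom(S_{n+1}) \subseteq \dom(S')$, which is exactly what we need.

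I do not expect any serious obstacle, because the two axioms of an EV class have been engineered precisely to make this argument work. The one point worth emphasizing is the need to alternate Unboundedness at \emph{every} stage rather than only once: each time the model grows from $M_n$ to $M_{n+1}$, new nonstandard formulae may enter $\form_{\Lang}(M_{n+1}) \setminus \form_{\Lang}(M_n)$, and only applying Unboundedness at stage $n+1$ ensures these are absorbed into $\dom(S_{n+2})$. Without this alternation the union step would leave $S'$ undefined on formulae of $M'$ that appeared too late in the chain.
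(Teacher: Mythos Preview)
Your proof is correct and follows essentially the same approach as the paper: build an $\omega$-chain by repeatedly applying Unboundedness, take the union, and invoke Closure together with the observation that every formula of the union already lives in some $M_n$ and hence in $\dom(S_{n+1})$. The paper's argument is virtually identical (modulo a minor misnumbering of the conditions there), and your additional remark about why Unboundedness must be applied at every stage is a helpful clarification.
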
 
\begin{proof}
	Fix a model $M \models T$, and a satisfaction class $S$ on $M$ such that $(M,S) \in \mathscr{C}$. Using the second condition of the definition of Enayat--Visser classes, we inductively build a chain of models $M=M_0 \preceq M_1 \preceq M_2 \preceq \ldots$ of length $\omega$ and satisfaction classes $S_i \subset M_i$ such that for each $i<j$, $S_i \subseteq S_j$, the domain of $S_{i+1}$ contains $\form_{\Lang}(S_i)$, and $(M_i,S_i) \in \mathscr{C}$.
	
	Let $M' = \bigcup_{i<\omega} M_i$ and let $S' = \bigcup_{i< \omega} S_i$. By the third condition in the definition of Enayat--Visser class, $(M',S') \in \mathscr{C}$, so it is enough to verify that $S'$ is a  full satisfaction class. Since every $\phi \in \form_{\Lang}(M')$ is in $\form_{\Lang}(M_j)$ for some $j$, this implies that $\phi$ is in the domain of $S_{j+1}$ and, consequently, in the domain of $S'$. 
\end{proof}

The essence of Enayat--Visser technique is that the unboundedness condition in the definition of EV is much easier to check than the existence of a full satisfaction class $S'$, since the former can be expressed as a scheme which in turn allows us to use a compactness argument. We will use this method in Section \ref{sec_proof_of_induction_step}. Let us add that it would be nice to incorporate this "local" nature of the argument to the statement of Theorem \ref{th_enayat_visser_lemma}. Unfortunately, it seems that general enough statements which cover actual applications of the technique would be somewhat awkward to express.

\section{Full satisfaction classes and definability} 

In this section, we will prove that in every countable, recursively saturated model $M \models \PA$, there exists a full satisfaction class $S$ such that $(M,S)$ is minimal. We will actually demonstrate a more general result.

\begin{theorem} \label{tw_every_set_definable_from_satisfaction}
	Let $M \models \PA$ be a countable recursively saturated model and let $A \subset M$ be an arbitrary set. 	There exists a full satisfaction class $S \subset M^2$ such that $A$ is definable in $(M,S)$ without parametres. 
\end{theorem}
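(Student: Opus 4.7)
The plan is to apply the Enayat--Visser Lemma (Theorem \ref{th_enayat_visser_lemma}) to a carefully chosen EV class $\mathscr{C}_A$ whose members are required to code $A$ into the satisfaction class in a uniform first-order way. For a countable recursively saturated $M$, the EV construction can be carried out inside $M$ itself (rather than producing a proper elementary extension) by invoking resplendence (Theorem \ref{th_recursive_saturated_resplendent}) at each stage to realize the required partial satisfaction classes.

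For each $a \in M$, take $\sigma_a := \exists v_1 \ldots \exists v_a\,(v_1 = v_1)$, a $\PA$-definable function of $a$. For nonstandard $a$, compositionality alone does not decide $S(\sigma_a, \emptyset)$: reducing $\sigma_a$ to the atomic $v_1 = v_1$ requires a length-$a$ induction, which is unavailable in non-inductive satisfaction classes, so the EV machinery retains the freedom to assign $S(\sigma_a, \emptyset)$ either value. For standard $a$, however, $\sigma_a$ is forced true by finitely many compositional steps, so a direct encoding $\theta(x) := S(\sigma_x, \emptyset)$ cannot express arbitrary $A$. To handle both cases uniformly, use the satisfaction class to define a nonstandard shift $c := \min\{y : \lnot S(\sigma_y,\emptyset)\}$; by arranging $S$ so that this set is non-empty (which is where the EV construction toggles $S(\sigma_y)$ false for a suitable nonstandard $y$), and noting that $\sigma_y$ is forced true for every standard $y \geq 1$, the element $c$ is a well-defined nonstandard element of $M$. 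The defining formula is then $\theta(x) := S(\sigma_{c + x + 1}, \emptyset)$, which places the encoding formula $\sigma_{c + x + 1}$ in the nonstandard regime for every $x \in M$.

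Let $\mathscr{C}_A$ comprise pairs $(M', T)$ where $M' \succeq M$, $T$ is a satisfaction class on $M'$, and the following two conditions hold: (i) there exists $y \in M'$ with $\lnot T(\sigma_y, \emptyset)$, and (ii) for every $a \in M'$ with $\sigma_{c + a + 1} \in \dom T$, $T(\sigma_{c+a+1}, \emptyset) \leftrightarrow a \in A$, where $c$ is the minimum witness to (i). Closure under $\omega$-chains is immediate, since both clauses are preserved in the limit. The main technical step is unboundedness: given $(M', T) \in \mathscr{C}_A$, extend $T$ to a satisfaction class $T'$ whose domain contains more of $\form_{\LPA}(M')$ while preserving the coding. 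Following the standard Enayat--Visser pattern, this reduces to the consistency over $(M', T)$ of a recursive scheme combining the compositional axioms on the enlarged domain with the coding clauses for the $\sigma_{c + a + 1}$; consistency of finite subschemes follows from the recursive saturation of $M'$ together with the fact that the nonstandard quantifier depth of each encoding formula prevents the required value from propagating to an atomic contradiction via finitely many compositional steps. Applying Theorem \ref{th_enayat_visser_lemma}, together with resplendence of $M$, then yields the required $S$ on $M$, and $\theta$ defines $A$ in $(M,S)$.

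The main obstacle is the verification of unboundedness, and specifically the compactness argument ensuring that compositional extensions can always be carried out consistently with the coding clauses. The nonstandardness of the shift $c$ is crucial here: it guarantees that every encoding formula $\sigma_{c + x + 1}$ is nonstandard, so setting its satisfaction value to match the required bit of $A$ does not force a contradiction through the atomic axioms within a standard number of compositional steps. A secondary delicate point is the insistence that the EV construction be realized on $M$ itself rather than a proper elementary extension, which depends on the countable recursive saturation of $M$ and a careful use of resplendence to materialize the partial satisfaction classes at each stage directly on $M$.
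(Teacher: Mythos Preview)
Your proposal has a genuine architectural gap. The Enayat--Visser Lemma produces a full satisfaction class on some elementary extension $M' \succeq M$, not on $M$ itself, and you propose to repair this by invoking resplendence ``at each stage.'' But resplendence (Theorem \ref{th_recursive_saturated_resplendent}) lets you realise only \emph{recursive} theories on $M$, whereas membership in your class $\mathscr{C}_A$ is governed by condition (ii), which encodes the full characteristic function of an arbitrary $A \subseteq M$. There is therefore no recursive theory whose consistency you could cite to pull the construction back to $M$; if you try to run the EV chain with $M_i = M$ throughout, the unboundedness step already asks for a full class on $M$ satisfying (ii), i.e.\ the theorem itself. The paper's proof works precisely because it sidesteps this: it does not apply the EV Lemma once to a class encoding all of $A$, but builds an $\omega$-chain $S_0 \subseteq S_1 \subseteq \cdots$ directly on $M$, processing a fixed enumeration $(a_n)$ of $M$ one element at a time. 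At stage $n$ only the single bit ``is $a_n \in A$?'' is consumed, so the extension step is expressible by a recursive theory over $\ElDiag(M, S_n)$ (this is the content of Lemma \ref{lem_krok_indukcyjny_poprawnosc_kwantyfikatorowa}, whose consistency is in turn established via an EV argument), and resplendence applies legitimately. The domains of the $S_n$ are arranged to be cofinal, so the union is a full class on $M$.

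There are also secondary problems with the definition of $\mathscr{C}_A$. Your shift $c = \min\{y : \lnot T(\sigma_y, \emptyset)\}$ is computed from $T$, but for a partial $T$ the condition $\lnot T(\sigma_y, \emptyset)$ already holds whenever $\sigma_y \notin \dom(T)$; hence $c$ need not be nonstandard and can move as $\dom(T)$ grows, so closure under chains is not ``immediate'' and the coding clause (ii) is not stable along the chain. Condition (ii) also quantifies over $a \in M'$ while testing $a \in A \subseteq M$, forcing $T(\sigma_{c+a+1},\emptyset)$ false for every $a \in M' \setminus M$, a constraint you have not checked for compatibility with unboundedness. Finally, the unboundedness argument (``consistency of finite subschemes follows from recursive saturation'') is not a proof; the actual compactness step requires exhibiting a concrete partial satisfaction class in $N$ meeting the finitely many constraints, which is where the paper spends most of its effort (Section \ref{sec_proof_of_induction_step}).
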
 

\begin{proof}
	Let $M \models \PA$ be a countable recursively saturated model. Let for $a \in M$, 
	\begin{displaymath}
	\eta_{a} = \forall x \underbrace{\Bigl( x=x \wedge x=x \wedge \ldots \wedge x=x \Bigr)}_{a \textnormal{ times}}
	\end{displaymath}
	 and let for $\bar{v} \in \VarSeq(M)$, $\eta_{a,\bar{v}}$ be the following formula:
	\begin{displaymath}
	\exists \bar{v} \ \eta_a.
	\end{displaymath} 
We will construct a satisfaction class such that:
	\begin{itemize}
		\item If quantifier correctness fails for $\phi$, then $\phi = \eta_{a,\bar{v}}$ for some $a \in A$ and some $\bar{v} \in  \VarSeq(M)$ (where we also allow the trivial block of quantifiers). 
		\item If $a \in A$, then the quantifier correctness fails for $\eta_a$.
	\end{itemize} 
Let us fix an enumeration $(a_n)_{n \in \omega}$ of the elements of $M$ and a cofinal increasing sequence of elements of $M$, $(b_n)_{n \in \omega}$.
  We inductively construct regular satisfaction classes $S_0 \subseteq S_1 \subseteq \ldots \subseteq M$ such that:
	\begin{itemize}
		\item The domain of $S_n$ contains exactly the formulae of syntactic depth at most $b$ for some $b \geq b_n$.\footnote{Recall that the notion of syntactic depth was introduced in Definition \ref{def_formalised_syntax}.}
		\item If $a_n \in A$, then the quantifier correctness fails for the formula $\eta_{a_n}$.
		\item If $\phi \in \form_{\LPA}(M)$ is not of the form $\eta_{a_i,\bar{v}}$ for some $i \leq n, \bar{v} \in \VarSeq(M)$, then the quantifier correctness holds for $\phi$. 
		\item The model $(M,S_n)$ is recursively saturated.
	\end{itemize}
The inductive steps of the construction are handled via the following lemma:
\begin{lemma}[Flexibility of quantifier correctness] \label{lem_krok_indukcyjny_poprawnosc_kwantyfikatorowa}
	Let $M \models \PA$. Let $a,b \in S$ and let $S \subset M$ be a regular  satisfaction class whose domain consists exactly of formulae of syntactic depth at most $b$. Then there exists an elementary extension $(M,S) \preceq (M',S'_0)$ and a full satisfaction class $S' \supset S'_0$ such that the quantifier correctness fails for the  formula $\eta_{a}$ and if it fails for $\phi$ according to $S'$, then  it fails according to $S_0'$ or $\phi = \eta_{a,\bar{v}}$ for some $\bar{v} \in \VarSeq(M')$.
\end{lemma}
The above Lemma  can be proved by the Enayat--Visser methods, but the proof itself turns out to be somewhat complicated when we are trying to provide all the relevant details which would be rather disruptive for the whole argument. Therefore we postpone it until Section \ref{sec_proof_of_induction_step}. Let us now finish the proof under the assumption that the lemma holds. Suppose that we have constructed the set $S_n$.  Applying Lemma \ref{lem_krok_indukcyjny_poprawnosc_kwantyfikatorowa} to the model $(M,S_n)$ and using its resplendence, we find a full satisfaction class $S \supseteq S_n$ such that $(M,S)$ is recursively saturated as a structure in the extended language, the quantifier correctness fails for formulae of the form $\eta_{a_i}$ for $i \leq n+1$ such that $a_i \in A$ and holds for all formulae not of the form $\eta_{a_i,\bar{v}}$. Finally, we let $S_{n+1}$ be $S$ restricted so that $\dom (S_{n+1})$ contains exactly the formulae of syntactic depth  at most $b  \geq b_{n+1}$, where $b$ is large enough that if the quantifier correctness fails for $\eta_{a_i}$, $i \leq n+1$, then this failure is already witnessed by the formulae of depth less than $b$. Since this restriction is definable in $S$, the obtained model $(M,S_{n+1})$ is also recursively saturated and clearly satisfies all the other requirements of the inductive construction.

We set $S = \bigcup_{n \in \omega} S_n$. Since all $S_n$ are compatible satisfaction classes, and the union of their domains is the whole model, $S$ is a full satisfaction class as well.  Finally, we claim that $A$ is definable in $(M,S)$. Indeed, it is enough to notice that $A$ is precisely the set of $x$ such that the quantifier correctness fails for the formula $\eta_x$.  	
\end{proof}

The following corollary answers Question 9.5 from \cite{enayat_visser_long} in the positive.

\begin{corollary} \label{cor_satisfaction_classes_where_every_element_definable}
	Let $M$ be a countable recursively saturated model of $\PA$. Then there exists a full satisfaction class $S \subset M$ such that every element of $M$ is definable in $(M,S)$ without parametres.
\end{corollary}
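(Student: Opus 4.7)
The plan is to exhibit a specific subset $A \subseteq M$ such that, once $A$ is parameter-free definable in $(M,S)$, every element of $M$ becomes parameter-free definable in $(M,S)$. Theorem \ref{tw_every_set_definable_from_satisfaction} then does the rest of the work.

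Since $M$ is countable, I would fix an enumeration $(a_n)_{n \in \omega}$ of $M$. Using the standard pairing function $\tuple{\cdot,\cdot}$ definable in $\PA$, I set
\begin{displaymath}
A = \set{\tuple{\val{\num{n}}, a_n} \in M}{n \in \omega}.
\end{displaymath}
Here the first coordinate is always the value of a standard numeral, so for each standard $n \in \omega$, the element $\tuple{\val{\num{n}}, a_n}$ is a well-defined member of $M$. Applying Theorem \ref{tw_every_set_definable_from_satisfaction} to this $A$, I obtain a full satisfaction class $S \subset M^2$ and an $\LPA(S)$-formula $\phi_A(x)$ which defines $A$ in $(M,S)$ without parametres.

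Now for any $n \in \omega$, the element $a_n$ is the unique $y \in M$ satisfying $\phi_A(\tuple{\num{n},y})$, and since $\num{n}$ is a closed $\LPA$-term, the formula
\begin{displaymath}
\psi_n(y) := \phi_A(\tuple{\num{n},y})
\end{displaymath}
is parametre-free in the language of $(M,S)$ and defines $a_n$ there. Because every element of $M$ appears in the enumeration $(a_n)_{n \in \omega}$, every element of $M$ is parametre-free definable in $(M,S)$, as required.

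There is no real obstacle here once Theorem \ref{tw_every_set_definable_from_satisfaction} is in hand; the only thing to verify is that the definability of $A$ genuinely transfers to definability of each $a_n$ without parametres, which works precisely because the "indices" in the coding are standard naturals and are therefore named by closed $\LPA$-terms (the numerals $\num{n}$). If one wishes to be a touch more careful, one may observe that recursive saturation of $M$ is preserved under the construction in Theorem \ref{tw_every_set_definable_from_satisfaction}, so $(M,S)$ is still recursively saturated; but this is not needed for the statement of the corollary itself.
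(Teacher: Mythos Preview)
Your proof is correct and follows essentially the same approach as the paper: code a bijection $\omega \to M$ as a subset of $M$ (via pairing), apply Theorem~\ref{tw_every_set_definable_from_satisfaction} to make that set parametre-free definable, and then observe that each element is $f(n)$ for some standard $n$, hence definable using the closed term $\num{n}$. Your write-up is in fact slightly more explicit than the paper's about the use of pairing and why the indices being standard yields parametre-free definability.
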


\begin{proof}
	Fix any bijection $f: \omega \to M$. This bijection can be coded by a subset of $M^2$. Fix a satisfaction class $S \subset M$ such that $f$ is definable in $M$. Then every element of $M$ is definable in $(M,S)$, as it is equal to $f(n)$ for some $n \in \omega$.
\end{proof}

\begin{corollary} \label{cor_minimal_rigid_satisfaction_classes}
	For any countable recursively saturated model $M \models \PA$, there exists a full satisfaction class $S \subset M$ such that $(M,S)$ has no proper elementary submodels and no nontrivial automorphisms.
\end{corollary}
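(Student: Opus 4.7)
The plan is to deduce this corollary immediately from Corollary \ref{cor_satisfaction_classes_where_every_element_definable}. That earlier corollary furnishes a full satisfaction class $S \subset M^2$ such that every element $a \in M$ is definable in $(M,S)$ without parameters. I will argue that both ``no proper elementary submodels'' and ``no nontrivial automorphisms'' follow essentially formally from pointwise definability in the expanded language.

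First, for minimality: suppose $(N,S') \preceq (M,S)$ where $S' = S \cap N^2$. For every $a \in M$, by hypothesis there is a formula $\phi_a(x)$ in the language $\LPA \cup \{S\}$ such that $(M,S) \models \forall x \, (\phi_a(x) \leftrightarrow x = a)$. Since $(N,S')$ is elementary in $(M,S)$, the sentence $\exists x \, \phi_a(x)$ holds in $(N,S')$, so the unique witness $a$ must lie in $N$. Hence $N = M$, establishing that $(M,S)$ has no proper elementary submodels.

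Second, for rigidity: let $f \in \Aut(M,S)$. For any $a \in M$, pick the defining formula $\phi_a(x)$ as above. Automorphisms preserve satisfaction of every formula in the language of $(M,S)$, so $(M,S) \models \phi_a(f(a))$. But $a$ is the unique element satisfying $\phi_a$, so $f(a) = a$. Since $a$ was arbitrary, $f = \mathrm{id}_M$, so $\Aut(M,S)$ is trivial.

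I do not anticipate any obstacle here, since the corollary is a standard consequence of pointwise definability in the expanded structure; the entire content of the result is already packed into Corollary \ref{cor_satisfaction_classes_where_every_element_definable}, which in turn rests on Theorem \ref{tw_every_set_definable_from_satisfaction} and the (still to be proved) Lemma \ref{lem_krok_indukcyjny_poprawnosc_kwantyfikatorowa}.
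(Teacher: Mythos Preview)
Your proof is correct and follows exactly the paper's approach: the paper's own proof simply says ``This follows immediately from the previous corollary,'' and you have spelled out the standard argument that pointwise definability in the expanded language forces both minimality and rigidity.
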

\begin{proof}
	This follows immediately from the previous corollary.
\end{proof}

\begin{corollary} \label{cor_continuum_many_nonequivalent_satisfaction}
	In every countable recursively saturated model $M \models \PA$, there exist continuum many pairwise non-elementary-equivalent full satisfaction classes.
\end{corollary}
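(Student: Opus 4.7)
The plan is to read the proof of Theorem \ref{tw_every_set_definable_from_satisfaction} carefully and exploit the fact that the definition of $A$ from $S_A$ is given by a single formula that does \emph{not} depend on $A$. Indeed, the construction outputs a full satisfaction class $S_A$ such that
\begin{displaymath}
A = \set{x \in M}{\textnormal{quantifier correctness fails for } \eta_x \textnormal{ in }(M,S_A)},
\end{displaymath}
and the condition on the right-hand side is expressed by one formula $\delta(x)$ of the language $\Lang_{\PA} \cup \{S\}$. This is the lever that makes it possible to turn ``every set is definable'' into ``continuum many inequivalent theories.''

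First, I would restrict attention to subsets $A \subseteq \omega$ (viewed as subsets of $M$ via numerals). For each such $A$, let $S_A$ be a full satisfaction class provided by Theorem \ref{tw_every_set_definable_from_satisfaction}. For every standard $n \in \omega$ consider the sentence
\begin{displaymath}
\sigma_n := \delta(\num{n}),
\end{displaymath}
which is a sentence of $\Lang_{\PA} \cup \{S\}$ without parametres. By the characterisation above, $(M,S_A) \models \sigma_n$ if and only if $n \in A$.

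Now I would conclude as follows: if $A, B \subseteq \omega$ are distinct, pick $n \in A \triangle B$; then one of $(M,S_A), (M,S_B)$ satisfies $\sigma_n$ and the other does not, so $(M,S_A) \not\equiv (M,S_B)$. Hence the map $A \mapsto \Th(M,S_A)$ is injective on $\mathcal{P}(\omega)$, producing $2^{\aleph_0}$ pairwise non-elementary-equivalent full satisfaction classes on $M$.

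The only point that needs checking is that the defining formula $\delta(x)$ is genuinely uniform across the different choices of $A$ in the proof of Theorem \ref{tw_every_set_definable_from_satisfaction}. Inspection of that proof shows that this is the case: the inductive construction of $S_A$ guarantees that quantifier correctness fails for $\eta_a$ exactly when $a \in A$, independently of $A$, so no extra work is needed here and the corollary is essentially a bookkeeping consequence of the main theorem.
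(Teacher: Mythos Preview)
Your proof is correct and follows essentially the same approach as the paper: both arguments rely on the observation that the defining formula $\delta(x)$ from Theorem \ref{tw_every_set_definable_from_satisfaction} is uniform in $A$, so that $\Th(M,S_A)$ determines $\Th(M,A)$. The paper states this at the level of arbitrary $A \subseteq M$ with $(M,A) \not\equiv (M,A')$, whereas you make the ``continuum many'' part explicit by restricting to $A \subseteq \omega$ and using the sentences $\delta(\num{n})$; this is just a concrete instantiation of the same idea.
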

\begin{proof}
	Let $M \models \PA$ be countable and recursively saturated.	By inspection of the proof of Theorem \ref{tw_every_set_definable_from_satisfaction}, we see that there exists a single formula such that for every subset $A \subseteq M$, there exists a full satisfaction class $S_A \subset M^2$ such that $A$ is definable in $(M,S)$ with that formula. Consequently, if $(M,A)$ and $(M,A')$ are not elementarily equivalent, then neither are $(M,S_A), (M,S_{A'})$.
\end{proof}

It is natural to ask whether analogues of Theorem \ref{tw_every_set_definable_from_satisfaction} hold also in higher cardinalities. Unfortunately, we cannot immediately see whether our results generalise to uncountable models $M \models \PA$, even if we assume that the said models are, for instance, chronically resplendent. We can prove a weaker result in the similar vein.

\begin{corollary}
	If $M$ is any model of $\PA$ (of an arbitrary cardinality), then for any $A \subseteq M$, there exists an elementary extension $(M',A') \succeq (M,A)$ and a satisfaction class $S' \subset {M'}^2$ such that $A'$ is definable in $(M',S')$.
\end{corollary}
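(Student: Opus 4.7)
The approach is to combine Theorem \ref{tw_every_set_definable_from_satisfaction} with a compactness argument. The crucial observation is that an inspection of the proof of that theorem reveals a \emph{single} formula $\phi_0(x) \equiv $ ``quantifier correctness fails for $\eta_x$'' in the language $\LPA \cup \{S\}$ that defines the prescribed set, regardless of which countable recursively saturated model and subset we began with. This uniformity is what makes the compactness argument possible and is the only nontrivial input; without it, the plan collapses.

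Given this observation, my plan is to consider the theory
\begin{displaymath}
T^{*} := \ElDiag(M,A) \cup \{\text{``$S$ is a full satisfaction class''}\} \cup \{\forall x \ (A(x) \leftrightarrow \phi_0(x))\}
\end{displaymath}
in the language $\LPA$ extended by a unary predicate $A$, a binary predicate $S$, and a constant $c_a$ for each $a \in M$, and to show that $T^{*}$ is consistent. Any model of $T^{*}$, interpreting each $c_a$ as $a$, directly yields an elementary extension $(M',A') \succeq (M,A)$ together with a full satisfaction class $S' \subset {M'}^2$ in which $A'$ is parameter-free definable via $\phi_0$, which is exactly what the statement requires.

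For the consistency of $T^{*}$, I would argue by compactness. Any finite fragment $T_0 \subseteq T^{*}$ mentions only finitely many of the constants $c_a$, say $c_{a_1}, \ldots, c_{a_n}$. First I would apply the downward L\"owenheim--Skolem theorem to $(M,A)$ to obtain a countable elementary substructure $(N_0,B_0) \preceq (M,A)$ containing $a_1, \ldots, a_n$. Next I would extend it elementarily to a countable recursively saturated $(N,B) \succeq (N_0,B_0)$, a standard construction. Then Theorem \ref{tw_every_set_definable_from_satisfaction} applied to $N \models \PA$ and $B \subseteq N$ produces a full satisfaction class $S \subset N^2$ in which $B$ is defined precisely by $\phi_0$. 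Interpreting each $c_{a_i}$ as $a_i$, the structure $(N,B,S)$ models $T_0$: the part of $\ElDiag(M,A)$ appearing in $T_0$ holds in $(N_0,B_0)$ by elementarity, hence in $(N,B)$, while the remaining two clauses hold by construction of $S$.

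The main obstacle, as already noted, is precisely the uniformity of $\phi_0$: if the defining formula delivered by Theorem \ref{tw_every_set_definable_from_satisfaction} depended on $A$ or on $M$, the final clause of $T^{*}$ could not be written as a single sentence and the reduction to finite fragments would break down. Since the proof of that theorem provides a uniform formula, the argument goes through; compactness then gives a model of $T^{*}$ and the corollary follows.
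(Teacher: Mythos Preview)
Your proposal is correct and follows essentially the same approach as the paper: both rely on the uniformity of the defining formula from Theorem~\ref{tw_every_set_definable_from_satisfaction} and a compactness argument to pass from the countable case to arbitrary cardinality. The paper's version is terser---it directly takes a countable recursively saturated $(M_0,A_0) \equiv (M,A)$ rather than your two-step downward L\"owenheim--Skolem followed by a recursively saturated extension---but the underlying idea is identical.
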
 
\begin{proof}
	Take a countable recursively saturated model $(M_0,A_0) \equiv (M,A)$. By applying Theorem \ref{tw_every_set_definable_from_satisfaction} to $(M_0,A_0)$, we see that the elementary diagram of $(M,A)$ is consistent with the statement "$A$ is definable from a satisfaction class $S$ via a formula $\phi$," where $\phi$ is implicitly given in the proof of the theorem. The corollary follows.
\end{proof}

 In \cite{smith}, Theorem 3.3, another result similar to Theorem \ref{tw_every_set_definable_from_satisfaction} was obtained. 
\begin{theorem}[Smith] \label{th_smith_definability}
	Let $M \models \PA$ be countable and recursively saturated. Let $A \subset M$ be such that the expanded structure $(M,A)$ is recursively saturated. Then there exists a full satisfaction class $S$ over $M$ and $\phi(v) \in \form_{\LPA}(M)$ such that
	\begin{displaymath}
	A = \set{x \in M}{(\phi(v),\tuple{v,x}) \in S}.
	\end{displaymath}
	Moreover, there exists a uniformly definable family of formulae $(\varepsilon_a(v))_{a \in M}$ such that for each $a \notin \omega$, there exists a satisfaction class $S$ for which
	\begin{displaymath}
	A = \set{x \in M}{(\varepsilon_a(v),\tuple{v,x}) \in S}
	\end{displaymath}
\end{theorem}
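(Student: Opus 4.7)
The plan is to adapt the proof of Theorem \ref{tw_every_set_definable_from_satisfaction}, substituting a single formula $\phi(v) \in \form_{\LPA}(M)$ for the family $\{\eta_{a, \bar v}\}$. I would fix a nonstandard $c \in M \setminus \omega$ and take
\[
\phi(v) \;:=\; \forall u_1 \forall u_2 \cdots \forall u_c\,(v = v),
\]
a formula with only $v$ free and nonstandard syntactic depth $c$. Writing $g_k(x)$ for the common value (by regularity) of $(\forall u_{k+1}\cdots u_c\,(v=v), \beta) \in S$ as $\beta$ varies over assignments with $\beta(v) = x$, the one-at-a-time compositional axiom forces $g_k(x) = g_{k+1}(x)$ for each specific $k$, but the chain $g_0(x) = g_1(x) = \cdots = g_c(x)$ does not collapse to $g_0 = g_c$ without induction on $S$-formulas. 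In a non-inductive full satisfaction class, cuts $\omega \subseteq \{k : g_k(x) = 0\} \subsetneq M$ are therefore consistent with all the axioms, so $g_c(x) = 1$ remains forced by the atomic clause while $g_0(x)$ is set freely for each $x$. The goal is then to build $S$ on $M$ so that $(\phi, \tuple{v, x}) \in S$ if and only if $x \in A$.

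For the construction, I would work in the expanded language $\LPA \cup \{A\}$ and treat $(M, A)$ as the base structure; this is recursively saturated by assumption and hence resplendent via Theorem \ref{th_recursive_saturated_resplendent}. Define an Enayat--Visser class $\mathscr{C}$ (in the sense of Definition \ref{def_locally_admissible_properties}) of pairs $((M', A'), S')$ with $(M', A') \succeq (M, A)$ an elementary extension in $\LPA \cup \{A\}$ and $S'$ a regular $\LPA$-satisfaction class on $M'$ for which the biconditional $(\phi, \tuple{v, x}) \in S' \leftrightarrow A'(x)$ holds for every $x$ such that $\phi$ is decided by $S'$ at $\tuple{v, x}$. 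Closure under elementary chains is immediate from Definition \ref{def_locally_admissible_properties}. Invoking Theorem \ref{th_enayat_visser_lemma}, I obtain a full $\LPA$-satisfaction class in $\mathscr{C}$ on an elementary extension of $(M, A)$, and resplendence of $(M, A)$ pulls it back to a full satisfaction class $S \subset M^2$ on $M$ with $A = \{x \in M : (\phi, \tuple{v, x}) \in S\}$.

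The hard part will be verifying unboundedness for $\mathscr{C}$: given $((M', A'), S'_0) \in \mathscr{C}$ with $\dom(S'_0)$ restricted to formulas of some bounded syntactic depth, produce an extension whose satisfaction class decides $\phi$ while keeping the biconditional. Following the Enayat--Visser scheme, this reduces to showing that the theory over $\LPA \cup \{A, S\}$ consisting of $\ElDiag(M', A', S'_0)$, the compositional and regularity axioms of $S$ on formulas up to depth $\dpt(\phi)+1$, the scheme asserting that $S$ extends $S'_0$, and the biconditional $\forall x\,((\phi, \tuple{v, x}) \in S \leftrightarrow A(x))$ is consistent. I would verify consistency by compactness inside a sufficiently saturated extension: any finite fragment of this theory mentions only standardly many values $g_k(x)$ in the chain $g_0(x), \ldots, g_c(x)$, and can be satisfied by independently choosing, for each $x \notin A'$, a galaxy-respecting nonstandard cut for $\{k : g_k(x) = 0\}$. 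Non-inductivity of the target $S$ permits such cuts, and recursive saturation of $(M, A)$ is precisely what ensures the consistent theory is realized in an elementary extension that pulls back to $M$.

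For the moreover clause, define $\varepsilon_a(v) := \forall u_1 \cdots \forall u_a\,(v = v)$; the G\"odel code of $\varepsilon_a(v)$ is a primitive recursive function of $a$, so $(\varepsilon_a(v))_{a \in M}$ is a uniformly definable family. For each $a \in M \setminus \omega$, the formula $\varepsilon_a(v)$ has nonstandard syntactic depth, and the same construction with $c := a$ and $\phi := \varepsilon_a$ yields a full satisfaction class $S$ on $M$ with $A = \{x : (\varepsilon_a(v), \tuple{v, x}) \in S\}$, as required.
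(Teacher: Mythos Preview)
The paper does not give its own proof of this theorem: it is stated as Smith's result (\cite{smith}, Theorem~3.3), and the text immediately following it says that ``the original proof relied on the $M$-provability methods'' and that ``a closely related result obtained via the Enayat--Visser construction \ldots\ can be found in \cite{WcisloKossak}.'' So there is no in-paper argument to compare yours against.

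Your outline is nonetheless in the spirit of the Enayat--Visser route the paper points to, and it is structurally sound: encode $A$ as the extension of a single nonstandard-depth formula $\phi(v)=\forall u_1\cdots\forall u_c(v=v)$, set up an EV class over the base structure $(M,A)$, obtain a full class in an elementary extension via Theorem~\ref{th_enayat_visser_lemma}, and pull back by resplendence of the countable recursively saturated $(M,A)$. The recursive-saturation hypothesis on $(M,A)$ is used exactly where it should be, and the ``moreover'' clause with $\varepsilon_a(v)$ is handled correctly. The one place where your sketch is thin is the unboundedness step: the single biconditional $\forall x\bigl(S(\phi,\tuple{v,x})\leftrightarrow A(x)\bigr)$ is not a scheme and so sits in every finite fragment, and once you are past the initial stage the given $S'_0$ may already decide $\phi$ and many of its immediate subformulae. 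You will need an explicit construction of the witnessing partial $S'$---along the lines of the rank-by-rank argument in Section~\ref{sec_proof_of_induction_step}---that respects $S'_0$, the finitely many compositional instances in $\Theta_0$, and the biconditional simultaneously. This is doable (the cut in the $g_k$-chain can be placed uniformly for all $x$), but it is where the actual work lies and deserves the same level of care as the paper's proof of Lemma~\ref{lem_quantifier_correcntess_EV}.
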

By writing $\phi(v)$, we mean that $v$ is a free variable of $\phi$. Notice that $\tuple{v,x}$ is an assignment which ascribes the value $x$ to the only variable of $\phi$. The original proof relied on the $M$-provability methods. A closely related result obtained via the Enayat--Visser  construction (and thus possibly easier to digest) can be found in \cite{WcisloKossak}.

Let us make one final remark. One could be dissatisfied with the fact that the satisfaction class we have constructed is rather pathological and one could wonder whether the result would still hold if we restricted our attention to full satisfaction classes which satisfy the quantifier correctness property.

It seems that while we used some specific pathology, the way we did it is rather generic. For instance if we require that the constructed class satisfy the quantifier correctness, we could turn our attention to disjunctions of the form $\eta_a \vee \eta_a \vee \ldots \vee \eta_a$ and ask for which $a$ all such disjunctions are true.\footnote{A similar flexibility property was used independently by Mateusz Łełyk in his unpublished work on Tarski Boundary.} If we additionally require that all sentences true in the sense of some partial arithmetical $\Sigma_n$ predicate  are rendered true by our satisfaction class, we can consider the set of sentences such that for every $c$, the sentence $\forall v \xi_{a,c}$ is true, where the formulae $\xi_{a,c}$ are defined inductively via the conditions:
	\begin{eqnarray*}
		\xi_{a,0} & = & v=v \wedge \eta_{a} \\
		\xi_{a,b+1} & = & \forall x_b \exists y_b \ (x_b= v \wedge y_b = v \wedge \xi_{a,b}).
	\end{eqnarray*} 
With just a slightly more complicated proof than the one presented above, one could show that the formulae $\xi_{a,c}$ are also flexible in the sense that the set of those $a$ such that $\xi_{a,c}$ is true for all $c$ can be an arbitrary set.

 On the other hand, we always have to expect that full satisfaction classes display some pathological behaviour, since already such benign principles as "a finite disjunction is satisfied iff one of the disjuncts is," "truth is closed under propositional proofs" or "every first-order tautology is true" already allow us to draw nontrivial conclusions about the arithmetical theory of the underlying model.\footnote{See \cite{EnayatPakhomov} for the result on disjunctions and \cite{cies_ksiazka} for the other proofs and an extensive discussion of the Tarski Boundary programme which investigates this phenomenon.}

  We conjecture that if a satisfaction class displays some pathological behaviour, then those pathologies are flexible enough to define any subset of the model (which will be slightly easier to phrase in terms of truth predicates, as they are explicitly presented as axiomatic theories).

\begin{question}
	Suppose that $P \supseteq \CT^-$ is a finite $\Lang_{\CT^-}$-theory conservative over $\PA$. Let $M \models \PA$ be a countable recursively saturated model. Is every set $A \subset M$ definable in some structure of the form $(M,T) \models P$?
\end{question}

\section{Satisfaction classes and automorphisms}	

In the previous section, we have shown that in every countable recursively saturated model $M \models \PA$ there exists a rigid satisfaction class, i.e. a satisfaction class $S \subset M$ such that $(M,S)$ admits no non-trivial automorphisms. A natural question arises whether the opposite result holds, i.e., whether given a countable recursively saturated model $M$, we can find a satisfaction class $S$ on $M$ such that $\Aut(M,S) = \Aut(M)$.

Clearly, if $f \in \Aut(M)$, then for any $a \in M$, the elements $a$ and $f(a)$ have the same arithmetical types. On the other hand, if $a,b \in M$ have the same arithmetical types and $M$ is countable and recursively saturated, then by Theorem \ref{th_recursive_saturated_automorphism}, there exists an automorphism $f \in \Aut(M)$ such that $f(a) = b$. Therefore, it is enough to check whether there exists a satisfaction class $S$ on $M$ such that for any two pairs $(\phi,\alpha), (\psi,\beta)$ with the same arithmetical types, $(\phi,\alpha) \in S$ iff $(\phi',\alpha') \in S$.\footnote{We are grateful for this observation to Roman Kossak who also suggested to study automorphisms and definability properties of satisfaction classes.}

One could guess that such a satisfaction class can be easily constructed by an application of the Enayat--Visser techniques. It is rather surprising that this intuition is wrong. 

\begin{theorem} \label{th_satisfaction_never_type_congruent}
	Let $M \models \PA$ and let $S \subset M^2$ be a full regular satisfaction class. Then there exist two pairs $(\phi,\alpha), (\phi',\alpha') \in M^2$ such that their arithmetical types are equal, but exactly one of them is in $S$. 
\end{theorem}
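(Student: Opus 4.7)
The plan is to argue by contradiction: I assume $S$ is arithmetically type-invariant (any two pairs with the same $\LPA$-type receive the same $S$-value) and derive a contradiction by combining Tarski-style undefinability with the recursive saturation of $M$ guaranteed by Lachlan's theorem.

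First I would establish that $S$ is not $\LPA$-definable as a subset of $M^2$. Since $S$ is regular and compositional, its values on pairs $(\phi, \emptyset)$ with $\phi$ a standard sentence coincide with arithmetical truth in $M$; hence any $\LPA$-formula $\psi(x,y)$ defining $S$ would, via $\psi(x, \qcr{\emptyset})$, define truth for standard $\LPA$-sentences, contradicting Tarski's undefinability theorem. From this I would extract the following finite-$\Gamma$ refinement: for every finite $\Gamma \subseteq \form_{\LPA}(\omega)$, at least one $\Gamma$-equivalence class on $M^2$ must contain both an $S$-element and a non-$S$-element, since otherwise $S$ would be a Boolean combination of formulas in $\Gamma$ and hence $\LPA$-definable.

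Next I would attempt to upgrade this finite consistency to an actual realization, producing two pairs $(\phi, \alpha), (\phi', \alpha') \in M^2$ with the same full $\LPA$-type and differing $S$-status. The cleanest route is to consider the recursive type
\[
q(x_1, x_2) \;=\; \{\phi(x_1) \leftrightarrow \phi(x_2) : \phi \in \form_{\LPA}(\omega)\} \cup \{S(x_1),\, \neg S(x_2)\}
\]
and realize it in $(M, S)$. If $(M, S)$ is recursively saturated in the extended language, this is immediate. Otherwise I would fall back on a compositional construction, building the pair from suitable test formulas such as the $\eta_a$ from the preceding section, or $\chi_c = \underbrace{0{=}0 \wedge \cdots \wedge 0{=}0}_{c \text{ times}}$, for which compositionality forces $\{c \in M : S(\chi_c)\}$ to be a subset of $M$ closed under $\pm 1$ and containing $\omega$; type-invariance would then constrain this set to be simultaneously a $\mathbb{Z}$-closed cut and a union of $\LPA$-types, and one looks for a concrete pair of elements of the same type in distinct $\mathbb{Z}$-galaxies on which compositionality forces disagreement.

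The main obstacle is this last step. Tarski yields only the non-$\LPA$-definability of $S$, which is strictly weaker than non-type-invariance --- the set of nonstandard elements, for instance, is type-invariant yet $\LPA$-undefinable --- so the contradiction must exploit the compositional structure of $S$ rather than definability alone. I expect the published argument either performs a saturation-style descent from an elementary extension of $(M, S)$ back to $M$, or carries out an explicit Enayat--Visser-style construction of indiscernible-looking formulas on which $S$ is forced to split.
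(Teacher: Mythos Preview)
Your proposal correctly identifies the central difficulty but does not close it. The recursive-saturation route fails because $(M,S)$ need not be recursively saturated in the expanded language, as you note. The fallback via $\chi_c$ also fails: compositionality indeed forces $\{c \in M : (\chi_c,\emptyset) \in S\}$ to be closed under $\pm 1$ and to contain $\omega$, but nothing prevents this set from being all of $M$, which is simultaneously a union of $\mathbb{Z}$-galaxies and a union of arithmetical types. So no contradiction with type-invariance arises from this family, and the same objection applies to the $\eta_a$. Your diagnosis that ``the contradiction must exploit the compositional structure of $S$ rather than definability alone'' is right, but neither of your concrete suggestions does so.

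The paper's argument supplies the missing leverage by passing to an \emph{inductive} satisfaction class. By Theorem~\ref{th_inductive_satisfaction_definable_in_full_satisfaction}, from any full regular $S$ one can define, using a single parameter $\gamma \in \form_{\LPA}(M)$, a regular inductive partial satisfaction class $S'$ whose domain contains all standard-depth formulae; explicitly, $(\phi,\alpha) \in S'$ iff $(\gamma(\num{\phi[\alpha]}),\emptyset) \in S$. Now the overspill argument you were reaching for becomes available in $(M,S')$: for each standard $n$ there must be sentences of the same $n$-type (over the parameter $\gamma$) with different $S'$-values, else $S'$ would be arithmetically definable from $\gamma$; by overspill in the inductive structure $(M,S')$ this persists to some nonstandard $c$, giving $\phi,\psi$ with $\tp(\phi,\gamma)=\tp(\psi,\gamma)$ and $(\phi,\emptyset)\in S'$, $(\psi,\emptyset)\notin S'$. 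Unwinding the definition of $S'$ yields $(\gamma(\num{\phi}),\emptyset)\in S$ and $(\gamma(\num{\psi}),\emptyset)\notin S$, and since substitution is arithmetically definable these two sentences have the same arithmetical type. The step you were missing is precisely this reduction to an inductive class, which is what makes overspill (your ``saturation-style'' intuition) legitimately applicable.
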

The proof will use the following fact:

\begin{proposition} \label{lem_inductive_satisfaction_never_type_congruent}
	Let $M \models \PA$ and let $S \subset M^2$ be a regular inductive partial satisfaction class containing all formulae of standard syntactic depth. Then there exist two sentences $\phi,\psi \in \Sent_{\LPA}(M)$ such that $\tp(\phi) = \tp(\psi)$, but $(\phi,\emptyset) \in S, (\psi,\emptyset) \notin S$.
	
	Moreover, for any element $a \in M$, there exists a pair of sentences $\phi,\psi \in \form_{\LPA}^{\leq 1}(M)$ such that $\tp (\phi,a) = \tp (\psi,a)$, but $(\phi,\emptyset) \in S, (\psi,\emptyset) \notin S.$\footnote{This Proposition essentially appears  in the second proof of Theorem 1 in \cite{HamkinsYang}, attributed to Woodin, where it is stated and proved in the context of recursively saturated satisfaction classes, rather than inductive ones. It is also formulated as a remark in a MathOverflow question, \url{https://mathoverflow.net/questions/186823/is-there-a-nonstandard-model-of-arithmetic-having-precisely-one-inductive-truth} although the argument seems to contain a gap. However, it is exactly that remark which inspired the results in this section. Previously, we expected that a satisfaction class can in fact have the same automorphism group as the original model.}
\end{proposition}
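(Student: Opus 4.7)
I would prove the proposition by contradiction. Suppose that the truth set
\begin{displaymath}
T_S := \set{\phi \in \Sent_{\LPA}(M)}{(\phi,\emptyset) \in S}
\end{displaymath}
is invariant under equality of arithmetical types, i.e.\ $\tp(\phi) = \tp(\psi)$ implies $\phi \in T_S \Leftrightarrow \psi \in T_S$ for all $\phi,\psi \in \Sent_{\LPA}(M)$. The plan is to derive two conclusions in direct tension: first, that $T_S$ is not $\LPA$-definable in $M$, and second, that under the type-invariance assumption $T_S$ must nevertheless be $\LPA$-definable in $M$.

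For the first conclusion I would invoke the Tarski diagonal argument, which is available because $S$ is regular and contains every standard-depth formula. An external induction on standard syntactic depth yields $\sigma \in T_S \Leftrightarrow M \models \sigma$ for every $\sigma \in \Sent_{\LPA}$ of standard depth. If $T_S$ were defined in $M$ by some $\chi(v) \in \LPA$, the diagonal lemma would produce a standard $\sigma$ with $\PA \vdash \sigma \leftrightarrow \neg\chi(\qcr{\sigma})$, so $\sigma \in T_S \Leftrightarrow M \models \sigma \Leftrightarrow M \models \neg \chi(\qcr{\sigma}) \Leftrightarrow \sigma \notin T_S$, a contradiction.

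For the second conclusion I would pass to the expansion $(M,S)$, which is recursively saturated in the extended language because $S$ is inductive, and consider the partial recursive type
\begin{displaymath}
p(x,y) := \{x \in T_S,\ y \notin T_S\} \cup \set{\theta(x) \leftrightarrow \theta(y)}{\theta \in \LPA}
\end{displaymath}
over the empty parameter set. The type-invariance assumption says exactly that $p$ is not realized in $(M,S)$, so recursive saturation forces $p$ to be inconsistent with $\Th((M,S))$. Compactness then yields a finite $F \subset \LPA$ such that whenever $x$ and $y$ agree on every formula in $F$, they agree on $T_S$-membership. Hence $T_S$ is a union of at most $2^{|F|}$ classes of agreement on $F$, each of which is $\LPA$-definable as an explicit Boolean combination of formulae in $F$. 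Thus $T_S$ is $\LPA$-definable in $M$, contradicting the first step, and the first claim of the proposition follows.

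The main technical hurdle is justifying the recursive saturation of $(M,S)$ in the partial-inductive setting: for full inductive satisfaction classes this is folklore, but in our partial case one has to check that compositionality on standard-depth formulae together with the full induction scheme in the extended language is enough to realize every recursive type over the expansion. The ``moreover'' clause is handled by relativising $p$ to depth-$\leq 1$ sentences and parameterising by $a$; the compactness and finite-separation steps go through unchanged, while the extra subtlety is the depth-$\leq 1$ Tarski diagonal needed at the end, which one arranges by using the parameter $a$ to encode the Gödel-syntactic manipulation inside a single closed term, so that the resulting self-referential sentence takes the form $\forall v\,\theta(v)$ with atomic $\theta$.
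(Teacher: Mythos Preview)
Your strategy for the first claim is correct and runs parallel to the paper's, though you reach the conclusion through recursive saturation where the paper uses overspill. Both arguments rest on the same two ingredients: Tarski undefinability of $T_S$, and the fact that full induction in $(M,S)$ turns ``no finite arithmetical separation of $T_S$'' into ``no separation at all''. The paper makes this explicit by formalising, for each $n$, the relation ``same arithmetical $n$-type'' inside $\PA$; since there are only finitely many $n$-types, if $T_S$ respected $n$-types it would be definable, so for every standard $n$ one finds a separating pair, and overspill pushes this to a nonstandard $c$ (whence the pair shares the same genuine type). Your route---the unrealised recursive type $p(x,y)$ must be inconsistent, hence a finite $F\subset\LPA$ already determines $T_S$---is the contrapositive of the same phenomenon. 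Neither approach is more general; overspill is slightly more direct, while yours makes the compactness step more visible.

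Two corrections. First, the ``main technical hurdle'' you flag is not one: recursive saturation of $(M,S)$ follows from the full induction scheme in the expanded language together with nonstandardness of $M$, and has nothing to do with compositionality or with $S$ being a full versus partial satisfaction class. (Nonstandardness is tacit here; in the standard model every element is definable and the proposition is false.) Compositionality on standard-depth formulae is needed only for the Tarski step, to ensure $T_S$ agrees with actual truth on standard sentences. Second, you have misread the ``moreover'' clause as imposing a depth-$\leq 1$ restriction on $\phi,\psi$. It does not: the paper's own witnesses are of the form $\num{a}=\num{a}\wedge\phi$, which have arbitrary depth. No depth-$1$ diagonal is needed; once your parametrised argument shows $T_S$ would be $\LPA$-definable from $a$, ordinary Tarski undefinability with parameters gives the contradiction. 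The paper's device for this part is simply to tag each sentence with the conjunct $\num{a}=\num{a}$, so that $a$ is recoverable from the sentence itself, and then rerun the first argument verbatim.
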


In the proof below, by an \df{$n$-type} of an element $a$, we mean the set of arithmetical formulae in a single variable of syntactic depth $n$ satisfied by $a$. We denote the $n$-type of $a$ by $\tp_n(a)$. It is easy to see that over relational languages the set of $n$-types is finite for each $n$. 	In this paper, we actually assume that $\LPA$ contains function symbols. However, we can formulate a relational language, where each $n$-ary arithmetical function symbol is replaced with an $n+1$-ary relation symbol and translate every sentence in $\PA$ to an equivalent sentence in that language, so we can indeed assume that for each $n$, the set of arithmetical $n$-types over $\PA$ is finite.

\begin{proof}	
	To prove the first part of the claim, notice that for every $n \in \omega$, it is expressible in $\PA$ that two elements $a,b$ have the same arithmetical $n$-type.  Therefore, for every $n \in \omega$, there exist two sentences $\phi, \psi \in \Sent_{\LPA}(M)$, such that $\tp_n(\phi) = \tp_n(\psi)$, but only one of them is in $S$, since otherwise $S$ would be arithmetically definable over $M$ (a sentence would be true according to $S$ if its $n$-type is one of the finitely many $n$-types which guarantee that the sentence is true).
	
	By overspill, there exists a pair of sentences $\phi,\psi \in \Sent_{\LPA}(M)$ and a nonstandard element $c$ such that 
	\begin{displaymath}
	(M,S) \models \forall n \leq c \ S(\tp_n(\num{\phi}) = \tp_n (\num{\psi}), \emptyset) \wedge S(\phi,\emptyset) \wedge \neg S(\psi,\emptyset).
	\end{displaymath}
	These sentences $\phi,\psi$ satisfy our requirements. 
	
	To prove the "moreover" part, notice that we can consider sentences of the form $\phi^{a}$ defined as  $\num{a} = \num{a} \wedge \phi$. Again, for any $n \in \omega$, there exists a pair of sentences $\phi^a, \psi^a \in \Sent_{\LPA}(M)$ such that they have the same $n$-types and exactly one is in $S$, since otherwise $S$ (and consequently, the elementary diagram of $M$) would be definable in $M$ over the parameter $a$. Just like in the previous case, we can find two sentences $\phi^a, \psi^a$ which have the same arithmetical types, but only one of them is true according to $S$. Finally, observe that $a$ is definable from both $\phi^a$ and $\psi^a$, so if $\tp(\phi^a) = \tp(\psi^a)$, it follows that $\tp(\phi,a) = \tp(\psi,a)$.
\end{proof}

Now we are ready to prove Theorem \ref{th_satisfaction_never_type_congruent}. Our argument will make use of the following fact. Its proof may be found in \cite{WcisloKossak}, Theorem 3.

\begin{theorem} \label{th_inductive_satisfaction_definable_in_full_satisfaction}
	Suppose that $M \models \PA$ and $S \subseteq M^2$ is a regular full satisfaction class. Then there exists $S' \subset M^2$ such that $S'$ is a regular inductive satisfaction class whose domain (properly) contains all formulae of standard syntactic depth.

	Moreover, there exists $\gamma(v) \in \form_{\LPA}(M)$ such that $S'$ is definable in $(M,S)$ with the parameter $\gamma$ via:
	\begin{displaymath}
	S' = \set{(\phi,\alpha) \in M^2}{\phi \in \form_{\LPA}(M), \alpha \in \Val(\phi) \wedge (\gamma(\num{\phi[\alpha]}), \emptyset) \in S}.
	\end{displaymath} 
\end{theorem}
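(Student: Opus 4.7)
The plan is to internalise a partial compositional satisfaction function $f \in M$ via recursive saturation, take $\gamma(v)$ to be the standard formula ``$f$ assigns the value $1$ to $v$'' but with the nonstandard numeral $\num{f}$ substituted, and let $S$ do the work of evaluating $\gamma$ applied to numerals. By regularity of $S$ together with the classical agreement of a full satisfaction class with $M$-truth on standard $\LPA$-formulas, the condition $(\gamma(\num{\phi[\alpha]}),\emptyset) \in S$ collapses to a purely arithmetical assertion about $f$ and $\phi[\alpha]$ in $M$. Hence $S'$ turns out to be $\LPA$-definable in $M$ with parameter $f$, which is what ultimately yields induction for $(M,S')$.

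First I would invoke Lachlan's theorem: since $M$ carries a full satisfaction class, $M$ is recursively saturated. I fix a standard $\LPA$-formula $\tau(x,v)$ formalising ``$x$ codes a partial function on $\Sent_{\LPA}$ that satisfies the clauses of $\Comp$ on its domain, and assigns $v$ the value $1$''. For each standard $n$, $\PA$ proves (by induction on $n$, using $\beta$-function style coding of finite truth tables) the existence of a unique compositional $f_n \in M$ defined on $\LPA$-sentences of depth $\leq n$. I then consider the recursive type
\[
p(x,y) \;=\; \{y > \num{n} : n \in \omega\} \cup \bigl\{\forall v\,\bigl(\dpt(v) \leq y \rightarrow \text{``$x$ satisfies the compositional clauses at $v$''}\bigr)\bigr\},
\]
which is finitely realised in $M$ by the pairs $(f_n,\num{n})$. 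Recursive saturation of $M$ furnishes $(f,c) \in M^2$ realising $p$, with $c$ nonstandard and $f$ compositional on all depth-$\leq c$ sentences in $M$. I set $\gamma(v) := \tau(\num{f}, v)$ (nonstandard as a formula only because the numeral $\num{f}$ is long) and define $S'$ by the formula displayed in the statement.

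For the verification I would first reduce $S'$ to arithmetic. The sentence $\tau(\num{f},\num{\phi[\alpha]})$ is the substitution $\tau(x,v)[x\mapsto f,\, v\mapsto \phi[\alpha]]$, so by regularity of $S$ (Definition~\ref{defi_regular_satisfaction}) the membership $(\gamma(\num{\phi[\alpha]}),\emptyset) \in S$ is equivalent to $(\tau,[x\mapsto f,\,v\mapsto \phi[\alpha]]) \in S$. Since $\tau$ is a \emph{standard} $\LPA$-formula, the classical fact that a full satisfaction class agrees with $M$-truth on standard formulas gives the further equivalence with $M \models \tau(f,\phi[\alpha])$, i.e.\ with $f(\phi[\alpha]) = 1$. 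Hence $S' = \{(\phi,\alpha) : M \models f(\phi[\alpha]) = 1\}$, an $\LPA$-definable subset of $M^2$ with parameter $f$. From here the compositional clauses and regularity of $S'$ are inherited from the compositionality of $f$; the domain of $S'$ contains every depth-$\leq c$ sentence, hence (properly) all standard-depth formulas, since $c$ is nonstandard. Inductivity follows because every $\LPA \cup \{S'\}$-formula is $(M,S')$-equivalent to an $\LPA$-formula with parameter $f$, and induction for those holds by $M \models \PA$.

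The main obstacle is Step~2: producing a single standard $\LPA$-formula $\tau$ for which $\PA$ proves a uniform scheme of existence of internal compositional satisfaction on each standard depth, so that $p$ is finitely realisable in $M$. This is the familiar content of ``$\PA$ codes finite truth tables via Gödel sequences'' but has to be arranged precisely enough that the resulting $f$ witnesses the single first-order formula $\forall v\,(\dpt(v) \leq y \to \ldots)$ after being pulled up by recursive saturation to a nonstandard $c$. Once $(f,c)$ is in hand, the rest is essentially just extraction via regularity of $S$ together with the standard-formula correctness of any full satisfaction class.
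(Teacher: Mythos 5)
A preliminary remark: the paper does not prove this theorem itself --- it is quoted from \cite{WcisloKossak} (Theorem 3) --- so there is no in-paper argument to compare yours against, and your attempt must stand on its own. It does not: the step you yourself flag as the ``main obstacle'' is not a routine coding matter but an impossibility. An element $f \in M$ can only code a function with a \emph{bounded} domain, whereas the set of $\LPA$-sentences of syntactic depth $\leq n$ is cofinal in $M$ already for $n = 0$ (atomic sentences $s = t$ with arbitrarily large closed terms). Hence $\PA$ does not prove the existence of your $f_n$, the type $p(x,y)$ is not finitely realised, and no coded $f$ that is compositional on all depth-$\leq c$ sentences can be extracted by recursive saturation. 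Replacing coded functions by definable partial satisfaction predicates does not help either: the arithmetical definitions of satisfaction for depth-$\leq n$ formulae have quantifier complexity growing with $n$, so they cannot be packaged into a single standard formula $\tau(x,v)$ to which saturation or overspill could be applied.

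More decisively, the conclusion you reach --- that $S'$ is $\LPA$-definable in $M$ with a parameter $f$ while being a regular compositional satisfaction class whose domain contains all formulae of standard depth --- is outright false, so no repair of the intermediate steps can rescue the strategy. Suppose $S' = \set{(\phi,\alpha)}{M \models \sigma(\phi,\alpha,f)}$ were such a class. The parametric diagonal lemma applied to $\neg\sigma(x,\emptyset,y)$ yields a standard formula $\lambda(y)$ with $\PA \vdash \forall y \, \bigl(\lambda(y) \leftrightarrow \neg\sigma(\qcr{\lambda(\num{y})},\emptyset,y)\bigr)$. The sentence $\lambda(\num{f})$ has a nonstandard G\"odel code but \emph{standard syntactic depth} (substituting a numeral does not increase depth, by the paper's Definition \ref{def_formalised_syntax}), so it lies in $\dom(S')$. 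An external induction on the standard formula $\lambda(y)$, using compositionality on its finitely many standard subformulae together with regularity of $S'$, gives $(\lambda(\num{f}),\emptyset) \in S'$ iff $M \models \lambda(f)$, while the diagonal equivalence gives $M \models \lambda(f)$ iff $(\lambda(\num{f}),\emptyset) \notin S'$ --- a contradiction. This is exactly why the theorem is nontrivial: the inductive class $S'$ is definable in $(M,S)$ but cannot be arithmetically definable in $M$ alone, and the cited proof accordingly takes $\gamma$ to be a genuinely nonstandard formula whose evaluation is delegated to the full satisfaction class $S$, rather than a standard template applied to an internal coded object.
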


\begin{proof}[Proof of Theorem \ref{th_satisfaction_never_type_congruent}]
	Let $M \models \PA$ and let $S$ be a full satisfaction class on $M$. Let $S'$ be the inductive satisfaction class on $M$ definable from $(M,S)$ with the parameter $\gamma$, as in Theorem \ref{th_inductive_satisfaction_definable_in_full_satisfaction}. 
	
	Let $\phi, \psi \in \Sent_{\LPA}(M)$ be two elements such that $(\phi,\emptyset) \in S', (\psi,\emptyset) \notin S'$ and $\tp(\phi, \gamma) = \tp (\psi,\gamma)$, as given by Lemma \ref{lem_inductive_satisfaction_never_type_congruent}.  By definition of $S'$, it follows that 
	\begin{displaymath}
	(\gamma(\num{\phi}),\emptyset) \in S, (\gamma(\num{\psi}),\emptyset) \notin S.
	\end{displaymath}
	Since $\tp(\phi,\gamma) = \tp(\psi,\gamma)$, it follows that $\tp(\gamma(\num{\phi})) = \tp (\gamma(\num{\psi}))$, because substitution is a definable operation. Thus $(\gamma(\num{\phi}), \emptyset), (\gamma(\num{\psi}),\emptyset)$ are the desired pairs. 
\end{proof}

\begin{remark}
	Actually, the regularity requirement is not needed in Theorem \ref{th_inductive_satisfaction_definable_in_full_satisfaction} and consequently in Theorem \ref{th_satisfaction_never_type_congruent}. However, the cited result is formulated in \cite{WcisloKossak} in the context of $\CT^-$, where we automatically assume regularity, so we decided to leave it as is. 
\end{remark}

\section{Flexibility of quantifier correctness} \label{sec_proof_of_induction_step}

In order to complete the proof of Theorem \ref{tw_every_set_definable_from_satisfaction}, we need to show that Lemma \ref{lem_krok_indukcyjny_poprawnosc_kwantyfikatorowa} holds. We restate it for convenience of the reader.
\begin{lemma} \label{lem_krok_indukcyjny_poprawnosc_kwantyfikatorowa_rep}
		Let $M \models \PA$. Let $a,b \in S$ and let $S \subset M$ be a regular  satisfaction class whose domain consists exactly of formulae of syntactic depth at most $b$. Then there exists an elementary extension $(M,S) \preceq (M',S'_0)$ and a full satisfaction class $S' \supset S'_0$ such that the quantifier correctness fails for the  formula $\eta_{a}$ and if it fails for $\phi$ according to $S'$, then  it fails according to $S_0'$ or $\phi = \eta_{a,\bar{v}}$ for some $\bar{v} \in \VarSeq(M')$.
\end{lemma}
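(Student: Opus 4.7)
The plan is to apply the Enayat--Visser Lemma (Theorem \ref{th_enayat_visser_lemma}) to a suitably chosen EV class, after first producing a seed $(M', S_0')$ that already carries the quantifier-correctness failure for $\eta_a$. The EV machinery will then extend $S_0'$ to a full satisfaction class $S'$ without introducing new failures outside the designated $\eta_{a, \bar{v}}$ family.

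First I will construct $(M', S_0')$ by a short compactness argument: I will adjoin $\eta_a$, a single formula $\exists \bar{v} \eta_a$ for some carefully chosen nonstandard sequence $\bar{v} \in M'$, and all their direct subformulae to $\dom(S)$, stipulating that $S_0'(\eta_a, \emptyset)$ is false and $S_0'(\exists \bar{v} \eta_a, \emptyset)$ is true. The key observation making this extension consistent is that the binary compositional clauses constrain values only one layer at a time; propagating falsity from $\eta_a$ up through the nonstandardly deep quantifier block would need induction on a formula mentioning $S_0'$, which is unavailable.

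Next I will define $\mathscr{C}$ as the class of pairs $(N, R)$ with $N \succeq M'$, with $R$ a regular satisfaction class extending $S_0'$, and such that quantifier correctness holds for every $\phi \in \dom(R)$ that is not of the form $\eta_{a, \bar{w}}$ for some $\bar{w} \in \VarSeq(N)$. Closure of $\mathscr{C}$ under elementary chains is routine because regularity, compositionality, and the restricted QC condition are all preserved by directed unions. To verify unboundedness, I will write down the theory $T^*$ over $\ElDiag(N, R)$ in the language $\LPA \cup \{R^*\}$ whose axioms assert that $R^*$ is a regular satisfaction class extending $R$ with $\dom(R^*) \supseteq \form_{\LPA}(N)$, and whose QC scheme forces quantifier correctness for each $\phi \in \form_{\LPA}(N)$ not of the form $\eta_{a, \bar{w}}$.

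The hard part will be showing consistency of $T^*$ with $\ElDiag(N,R)$. I plan to realize each finite fragment of $T^*$ by an $N$-coded partial satisfaction class $R^*_X \supseteq R$ evaluating a coded finite set $X$ of formulae, where for each $\phi = \exists \bar{w} \psi \in X$ with $\psi$ not of the template $\eta_a$ I will set $R^*_X(\phi, \alpha)$ equal to $\exists \beta \sim_{\bar{w}} \alpha\ R^*_X(\psi, \beta)$ directly, rather than peeling off one quantifier at a time, so as to enforce QC; while for $\phi = \exists \bar{w} \eta_a$ I will preserve the flexibility inherited from $S_0'$. The delicate bookkeeping that simultaneously respects the inherited values from $R$, the compositional clauses, the full QC constraint on non-pathological formulae, and the flexibility reserved for $\eta_{a, \bar{w}}$ is where the main technical work of the argument will lie.
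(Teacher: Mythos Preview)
Your plan is structurally the same as the paper's: define an EV class in which quantifier correctness is required everywhere except at the formulae $\eta_{a,\bar v}$, seed it with a configuration witnessing the failure at $\eta_a$, and obtain unboundedness by a compactness argument in which finite fragments are realised by building the partial class along a ``jump over a block of quantifiers'' relation. The paper organises things slightly differently---it carries the original class along as a second predicate and introduces the failure inside the first unboundedness step rather than in a preliminary seed---but that is cosmetic.

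There is, however, a genuine gap in your seed construction. You stipulate $S_0'(\eta_a,\emptyset)$ false and $S_0'(\exists\bar v\,\eta_a,\emptyset)$ true. Nothing in the hypotheses excludes $\dpt(\eta_a)\le b$, in which case $\eta_a$ already lies in $\dom(S)$ and its value is fixed by $S$; for standard $a$ it is outright true by external induction through the standard-depth conjunction. You then cannot force it false in any extension of $S$. The paper sidesteps this by never touching $\eta_a$ directly: it picks two sequences $\bar w\lhd\bar w'$ with $|\bar w|-b$ and $|\bar w'|-|\bar w|$ both nonstandard, so that $\eta_{a,\bar w}$ and $\eta_{a,\bar w'}$ certainly lie outside $\dom(S)$, and forces $\eta_{a,\bar w}$ true and $\eta_{a,\bar w'}$ false. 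Whatever value $\eta_a$ eventually receives, one of the two QC equivalences for $\eta_a$ then fails. You should adopt this two-point trick, or at minimum case-split on the value of $\eta_a$ and swap your stipulation accordingly.

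A second, smaller gap: your class $\mathscr C$ demands that QC hold for every $\phi\in\dom(R)$ not of the form $\eta_{a,\bar w}$. But $S$ itself may already carry QC failures for other formulae (in the intended application these are the failures at $\eta_{a_i}$ introduced at earlier stages), and they are inherited by $S_0'$. As written, your $\mathscr C$ would then not contain its own seed. The paper's class explicitly permits failures that are already present in the elementary image of $S$; you need the same escape clause.
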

We will show that a suitable class of models is an Enayat--Visser class. Before we proceed to the proof, let us introduce one more technical notion.

\begin{definition} \label{def_templetjki}
	Let $\phi$ be an arithmetical formula. By a \df{template} of $\phi$ we mean the least formula $\widehat{\phi}$ (with respect to the G\"odel code) such that:
	\begin{itemize}
		\item  $\phi$ can be obtained by substituting arithmetical terms in $\widehat{\phi}$.
		\item Every free variable occurs in $\widehat{\phi}$ exactly once. 
		\item No variable occurs in $\widehat{\phi}$ both free and bound.
		\item No complex term containing only free variables occurs in $\widehat{\phi}$. 
		\item No closed term occurs in $\widehat{\phi}.$ 
	\end{itemize}
	We say that formulae $\phi,\psi$ are \df{syntactically similar} if $\widehat{\phi} = \widehat{\psi}$. We denote this relation with $\phi \sim \psi.$
\end{definition}

Notice that if $\phi(\bar{s}) = \eta(\bar{t})$ and $\psi(\bar{q}) = \eta(\bar{r})$ for some sequences of terms $\bar{q},\bar{r},\bar{s},\bar{t}$, then $\phi \sim \psi$. In particular, if $(\phi,\alpha) \simeq (\psi,\beta)$ are extensionally equivalent, then $\phi$ and $\psi$ are syntactically similar, but the reverse implication does not hold. 

\begin{example}
	Let $\phi = \exists x \forall y \Big(x + (x \times 0) = (y \times v) + (v \times (w+0))\Big).$ Then
	\begin{displaymath}
	\widehat{\phi} = \exists x \forall y \Big(x + (x \times v_0) = (y \times v_1) +  v_2\Big),
	\end{displaymath}
	where $v_0,v_1,v_2$ are chosen so as to minimise the G\"odel code of $\widehat{\phi}$. Let $\alpha$ be an assignment which sends $v$ to $17$ and $w$ to $2$. Let $\beta$ be an assignment which sends $v_0$ to $0$, $v_1$ to $17$ and $v_2$ to $34$. Then $(\phi,\alpha) \simeq (\widehat{\phi},\beta)$.
\end{example}

Now, we can introduce the Enayat--Visser class relevant for Lemma \ref{lem_krok_indukcyjny_poprawnosc_kwantyfikatorowa}.

\begin{lemma} \label{lem_quantifier_correcntess_EV}
	Let $M \models \PA$, $a,b \in M$, and let $T \subset M$ be a regular satisfaction class whose domain contains exactly the formulae of syntactic depth $\leq b$. Then the class $\mathscr{C}$ is an Enayat--Visser class, where $\mathscr{C}$ consists of the pairs $((N,P),S)$ such that: 
	\begin{itemize}
		\item $(N,P) \succeq (M,T)$.
		\item $S \supset P$ is a regular satisfaction class (over $\LPA$). 
		\item The quantifier correctness fails for $\eta_a$.
		\item If the quantifier correctness fails for a formula $\phi$ according to $S$, then it fails according to $P$  or there exists $\bar{v} \in \VarSeq(N)$ such that $\phi = \eta_{a,\bar{v}}$.
		\item For every $\phi \in \dom(S)$, if $\phi = Q \bar{v} \psi$ for some $\bar{v} \in \VarSeq(N)$ and $\psi$ does not begin with $Q$, then $\psi \in \dom(S).$ 
	\end{itemize}
\end{lemma}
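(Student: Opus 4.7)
The plan is to verify both conditions defining an Enayat--Visser class for $\mathscr{C}$: closure under countable chains and the unboundedness condition.

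Closure is routine. Given a chain $((N_i, P_i), S_i)_{i<\omega}$ in $\mathscr{C}$, I form the union $((N, P), S)$. Elementarity over $(M, T)$ lifts along elementary chains, and the compositional, regularity, and domain-closure properties each depend only on finitely many elements verified at each formula, so they propagate to the union. The failure of quantifier correctness for $\eta_a$ persists because its witness $(\bar v^*, \alpha)$ appears at some stage $S_i$ and survives. Conversely, any failure of quantifier correctness in $S$ at some $\phi$ traces back to a finite stage; at that stage, either the failure was already in $P_i \subseteq P$, or $\phi = \eta_{a,\bar v}$ for some $\bar v \in \VarSeq(N_i)$.

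For unboundedness, given $((N, P), S) \in \mathscr{C}$, the plan is to construct the required extension via a compactness argument. Let $\Sigma$ be the theory, in the language of $(N, P, S)$ augmented by a fresh unary predicate $S^*$, asserting $\ElDiag((N, P, S))$ together with: $S^*$ is a regular satisfaction class extending $S$; for every $\phi \in \form_{\LPA}(N)$, $\phi \in \dom(S^*)$ (as a scheme in the parameter); $\dom(S^*)$ is closed under quantifier blocks; any failure of quantifier correctness for $S^*$ at a formula $\phi$ implies either that the failure is already in $P$ or that $\phi = \eta_{a, \bar v}$ for some $\bar v$; and quantifier correctness explicitly fails for $\eta_a$. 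By compactness, the consistency of $\Sigma$ reduces to the satisfiability of each finite fragment.

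For a finite fragment mentioning parameters $\phi_1, \ldots, \phi_r \in N$, I would pick a nonstandard $c^* \in N$ exceeding the syntactic depth of each $\phi_i$, fix $\bar v^* = \langle v_1^*, \ldots, v_{c^*}^* \rangle \in \VarSeq(N)$, and let $\Gamma$ be the closure of $\{\phi_1, \ldots, \phi_r\} \cup \{\exists v_k^* \ldots v_{c^*}^* \eta_a : 1 \le k \le c^*\}$ under direct subformulas and the quantifier-block stripping relation. I then define a partial regular satisfaction class $S^{**} \supseteq S$ on $\Gamma$ by assigning values uniformly across syntactic templates (Definition \ref{def_templetjki}), so that regularity holds automatically. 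On non-pathological formulas $S^{**}$ takes its Tarskian value computed in $N$; on the pathological chain, put $(\exists v_k^* \ldots v_{c^*}^* \eta_a, \emptyset) \in S^{**}$ for $k$ with $c^* - k$ externally finite (forced by compositionality as $(\eta_a, \emptyset) \in S^{**}$), and include its negation in $S^{**}$ for $k$ externally finite. Since $c^*$ is nonstandard, these two regions are externally disjoint, and the pointwise identity required by compositionality is respected at every internal $k \in \{1, \ldots, c^*\}$ because no specific $k \in N$ is the transition point.

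The main obstacle is the apparent conflict between the pointwise compositional constraints across the chain and the desired discrepancy between its endpoints: closing the gap by transitive chaining of boolean equalities would require $c^* - 1$ iterations of transitivity, which is not a valid meta-theoretic step when $c^*$ is nonstandard. This mirrors the elementary observation that a function $f : N \to \{0,1\}$ satisfying $f(k) = f(k+1)$ at every $k \in N$ need not be constant---take $f = 0$ on $\omega$ and $f = 1$ on $N \setminus \omega$. Once $S^{**}$ realizes this pattern on $\Gamma$ and the finite fragment is thereby verified, compactness yields a model of $\Sigma$, and the sought $((N', P'), S') \in \mathscr{C}$ is extracted.
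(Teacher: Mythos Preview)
Your closure argument and the compactness scaffolding for unboundedness are in line with the paper. The gap lies in the construction of the finite-fragment witness $S^{**}$, which as written is not a well-defined subset of $N^2$.

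The phrase ``Tarskian value computed in $N$'' has no content for nonstandard $\phi_i$: there is no truth definition on all of $\form_{\LPA}(N)$, and closing $\{\phi_1,\ldots,\phi_r\}$ under direct subformulae already produces an externally infinite tower on which you give no prescription and no base. On the $\eta_a$-chain you declare values only at the two externally finite ends, leaving the nonstandard middle unspecified; the $f:N\to\{0,1\}$ analogy is an intuition, not a construction---you still owe a concrete assignment at every formula relevant to $\Theta_0$ and a verification that adjacent values cohere. What the paper does instead is anchor everything in the given $S$: it sets $S_0 = S$, takes only the \emph{finitely many} syntactic-similarity classes $[\phi_i]_\sim$ of the parameters in $\Theta_0$ together with their roots (the root operation applied once, not iterated), orders these classes by a well-founded relation $\lhd$ combining direct-subformula and shortest-block-jump, and builds $S'$ in finitely many steps by applying one compositional or block-quantifier clause per rank. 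The failure for $\eta_a$ is introduced only in the initial case $(N,P,S)=(M,T,T)$---in the genuine unboundedness step it is already present in $S$ and is simply inherited---by fixing two specific $\bar w \initial \bar w'$ with $|\bar w|-b$ and $|\bar w'|-|\bar w|$ both nonstandard, putting $\eta_{a,\bar w}$ true and $\eta_{a,\bar w'}$ false into $S_0$, and observing that the finitely many ranked classes cannot bridge the gap. Your sketch has the right shape but is missing exactly this finite, well-founded scaffolding that turns $S^{**}$ into an actual object.
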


 The last condition is technical. It clearly holds for satisfaction classes which are defined exactly on formulae from an elementary submodel, which is exactly what we will get in our construction. Notice that if the lemma holds, then applying Theorem \ref{th_enayat_visser_lemma} to the class $\mathscr{C}$ defined above, we conclude that $\mathscr{C}$ contains a model with a full satisfaction class. We can then readily verify that such a model satisfies the claim of Lemma \ref{lem_krok_indukcyjny_poprawnosc_kwantyfikatorowa_rep}. So it is enough to prove that $\mathscr{C}$ is indeed an Enayat--Visser class.

\begin{proof}
	Fix a model $M \models \PA$, elements $a,b \in M$, and a satisfaction class $T$ on $M$ whose domain consists exactly of formulae of depth $\leq b$. We will show that the class $\mathscr{C}$ defined with respect to this model is an EV-class containing some pair $((M',T'),S)$. It is easy to check that $\mathscr{C}$ is closed under unions of chains. It is enough to observe that if a failure of quantifier correctness is witnessed by a pair of formulae in a satisfaction class $S$, then it is also witnessed in any $S'$ containing $S$. On the other hand, if quantifier correctness fails in a union of a chain of  satisfaction classes, then this failure has to be witnessed in one of the models.

	We will check the unboundedness condition and explain how the argument can be modified to show the existence of an elementary extension  $(M',T') \succeq (M,T)$ and a satisfaction class $S$ in that extension such that the pair $((M',T'),S)$ belongs to $\mathscr{C}$. In what follows, we will identify pairs $((X,Y),Z)$ with triples $(X,Y,Z)$. 
	
	Fix a tuple $(N,P,S) \in \mathscr{C}$ or $(N,P,S) = (M,T,T)$ (to find any $(M',T',S) \in \mathscr{C}$ witnessing its nonemptiness). We will find $(N',P',S') \in \mathscr{C}$ such that $(N,P) \preceq (N',P')$, $S' \supseteq S$ is a satisfaction class, and the domain of $S'$ contains $\form_{\LPA}(N)$. We will  say that a formula $\phi \in \form_{\LPA}(N)$ is \df{unproblematic}  if $\phi$ is not in the domain of $P$ and $\phi$ is not of the form $\eta_{a,\bar{v}}$ for any $\bar{v} \in \VarSeq(N)$.  The structure $(N',P',S')$ will be obtained as a model of the theory $\Theta$ with the following axioms:
	\begin{itemize}
		\item The elementary diagram of $(N,P)$, where the predicate $P$ is replaced with $P'$.
		\item $\Comp(\phi), \phi \in \form_{\LPA}(M)$ (The compositionality scheme for $S'$.)
		\item $\forall \phi, \psi \in \form_{\LPA} \forall \alpha \in \Val(\phi), \beta \in \Val(\psi) \Big((\phi,\alpha) \simeq (\psi,\beta) \rightarrow S'(\phi,\alpha) \equiv S'(\psi,\beta)\Big).$ (Regularity.)
		\item $S'(\phi,\alpha)$ for all $(\phi, \alpha) \in S.$ (The compatibility scheme.)
		\item $\forall x, y \ P'(x,y) \rightarrow S'(x,y).$
		\item $\forall \alpha \in \Asn(Q\bar{v}\phi) \ \Big(S'(Q\bar{v} \phi, \alpha) \equiv Q \beta \sim_{\bar{v}} \alpha \ S'(\phi,\beta) \Big)$, where $Q \in \{\forall,\exists\}, \bar{v} \in \VarSeq(M), \phi \in \form_{\LPA}(M)$ and $\phi$ is unproblematic. (The quantifier correctness scheme.) 
	\end{itemize} 

In the initial step, when $(N,P,S) = (M,T,T)$, we additionally pick two sequences $\bar{w}, \bar{w'} \in \VarSeq(M)$ such that $\bar{w} \initial \bar{w'}$ ($w$ is an initial segment of $w'$) and the differences $|\bar{w}| - b$, $|\bar{w'}| - |\bar{w}|$ are nonstandard, where $|\bar{w}|$, $|\bar{w'}|$ are the lengths of the respective sequences. We add the following axioms to $\Theta$:
\begin{itemize}
	\item  $S'(\eta_{a,\bar{w}}, \emptyset).$
	\item  $\neg S' (\eta_{a, \bar{w'}}, \emptyset)$.
\end{itemize} 
This guarantees that the quantifier correctness fails for $\eta_a$. The requirement on the length of $\bar{w}$ is to guarantee that the syntactic depth of $\eta_{a,\bar{w}}$ is high enough that the formula is not in the domain of $T$. 
	
If we take any model $(N',P',S') \models \Theta$ and restrict the predicate $S'$ to the formulae $\phi \in \form_{\LPA}(N')$ whose syntactic templates $\widehat{\phi}$ are in $\form_{\LPA}(N)$ (i.e., they essentially come from the original model), we obtain a model in $\mathscr{C}$ (compositionality and "non-failure" of quantifier correctness are easy to verify; the closure conditions on the domain follow automatically by elementarity and regularity). Therefore, it is enough to check that $\Theta$ is consistent which we will verify by a compactness argument. Let $\Theta_0 \subset \Theta$ be a finite subtheory. We will find a subset $S' \subset N$ such that $(N,S')$ satisfies $\Theta_0$. 

We introduce another technical notion which will be useful in this proof: if $\phi \in \form_{\LPA}(N)$ begins with a quantifier $Q$, then by its \df{root}, we mean  the unique formula $\psi$ such that $\phi = Q \bar{v} \psi$ for some $\bar{v} \in \VarSeq(M)$ and one of the following holds:
\begin{itemize}
	\item $\psi$ has syntactic depth $b$. 
	\item There is no formula satisfying the first condition and $\psi$ does not begin with a quantifier $Q$. 
\end{itemize}

Let $\Gamma_0$ be the (finite) set of formulae that appear under the satisfaction predicate $S'$ in the theory $\Theta_0$. Let $\phi_1, \ldots, \phi_n$ be an enumeration of formulae from $\Gamma_0$ and their roots (notice that we do not \emph{close} $\Gamma_0$ under taking roots; we apply the operation only once). Without loss of generality, we can assume that $\eta_{a,\bar{w}}, \eta_{a, \bar{w'}}$ are among $\phi_i$. (This will play any role only in the "initial step.") Consider the equivalence classes $[\phi_i]_{\sim}$ under the syntactic similarity relation $\sim$. We will construct $S'$ by induction on the following relation $\lhd$: $[\phi] \lhd [\psi]$ iff there exist $\phi' \in [\phi], \psi' \in \psi$ such that at least one of the following conditions holds:
\begin{itemize}
	\item $\phi'$ is a direct subformula of $\psi'$.
	\item $\phi'$ is unproblematic, there exists a sequence $\bar{v} \in \VarSeq(N)$ and $Q \in \{\forall, \exists\}$ such that $\psi' = Q\bar{v} \phi'$, and there is no $\bar{w} \initialeq \bar{v}$ such that $\psi' = Q \bar{w} \eta$, where $[\eta] = [\phi_i]$ for some $i \leq n$.
\end{itemize} 
The second condition is somewhat lengthy, but the idea is simple: we are allowed to make jumps over nonstandard blocks of quantifiers, but the relation $\lhd$ only holds for formulae linked by the shortest such jump. We could lift that restriction, but this would make some later parts of the proof awkward to express.

 We construct $S'$ as a union of a sequence of sets $S_j$ which we define by induction on the rank in the relation $\lhd$. The rank of a class $[\phi]$ is defined in a familiar way by the following equality:
\begin{displaymath}
\rk([\phi]) = \sup \set{\rk([\phi_i])}{i \leq n \wedge [\phi_i] \lhd [\phi] }.
\end{displaymath}
When showing the unboundedness of $\mathscr{C}$, we set:
\begin{displaymath}
S_0 = S.
\end{displaymath}
In this step, we are implicitly dealing with both the formulae from the domain of $S$ and the formulae $\phi$ whose class $[\phi]$ is $\lhd$-minimal. We implicitly make every such formula false under all assignments, since for such $\phi$ there is no $\alpha \in \Asn(\phi)$ such that $(\phi,\alpha) \in S_0$ (and it will remain so throughout the whole construction). Without loss of generality, we assume that $S$ contains all atomic formulae, i.e., equalities of terms, so they are also covered in this step.

When dealing with $(N,P,S) = (M,T,T)$, we instead define $S_0$ as the set of $(\phi,\alpha)$ such that one of the following conditions holds:
\begin{itemize}
	\item $(\phi,\alpha) \in T$.
	\item There exists a sequence $\bar{v} \in \VarSeq$ of a standard length $k \in \omega$ (possibly $k=0$) such that $\eta_{a,\bar{w}} = \exists \bar{v} \phi$ (and $\alpha \in \Asn(\phi)$ is arbitrary).
\end{itemize}
The second condition guarantees that the sentence $\eta_{a,\bar{w}}$ which we picked when we were defining the theory $\Theta$ will indeed be rendered true (by construction, the sentence $\eta_{a,\bar{w'}}$ and its direct subformulae will be rendered false).

We extend $S_j$ to $S_{j+1}$ so that the compositional conditions and quantifier correctness hold. More specifically, we define $S_{j+1}$ as a union of $S_j$ and the $(\phi,\alpha)$ such that $\rk([\phi]) = j+1$ and one of the following conditions holds:
\begin{itemize}
	\item For some $\psi \in \form_{\LPA}(N)$, $\phi = \neg \psi$ and $(\psi,\alpha) \notin S_{j}$.
	\item For some $\psi,\eta \in \form_{\LPA}(N)$, $\phi = \psi \vee \eta$ and $(\psi,\alpha) \in S_j$ or $(\eta,\alpha) \in S_j$.
	\item For some $\psi,\eta \in \form_{\LPA}(N)$, $\phi = \psi \wedge \eta$ and both $(\psi,\alpha) \in S_j$ and $(\eta,\alpha) \in S_j$.
	\item For some $\psi \in \form_{\LPA}(N), v \in \Var$, $\phi = \exists v \psi$ and there exists $\beta \sim_{v} \alpha$ such that $(\psi,\beta) \in S_j$. 
	\item For some $\psi \in \form_{\LPA}(N), v \in \Var$, $\phi = \forall v \psi$ and for all $\beta \sim_{v} \alpha$, $(\psi,\beta) \in S_j$.
	\item There exists $\bar{v} \in \VarSeq(N)$ and an unproblematic $\psi$ such that $\phi = \exists \bar{v} \psi$,  $[\psi] = [\phi_i]$ for some $i$, $\rk([\psi]) = j$, and there exists $\beta \sim_{\bar{v}} \alpha$ such that $(\psi,\beta) \in S_j$.  
	\item There exists $\bar{v} \in \VarSeq(N)$ and an unproblematic $\psi$ such that $\phi = \forall \bar{v} \psi$,  $[\psi] = [\phi_i]$ for some $i$, $\rk([\psi]) = j$, and for all $\beta \sim_{\bar{v}} \alpha$, $(\psi,\beta) \in S_j$. 
\end{itemize}

Since there are only finitely many classes $[\phi_i]$, there exists $j$ such that $S_j = S_{j+1}$. We define $S'$ as this last set in our construction. We now have to check that the constructed predicate satisfies the finite subtheory $\Theta_0 \subset \Theta$. 

Obviously, $(N,P,S')$ sastisfies the elementary diagram of $(N,P)$, $S'$ satisfies all the instances of the compatibility scheme and it extends $P$. 

Let us now check that if we consider the case where $(N,P,S) = (M,T,T)$, then indeed $\eta_{a,\bar{w}}$ is rendered true and $\eta_{a, \bar{w'}}$ is false. For either of the formulae, this is clear if its class is $\lhd$-minimal. If $\rk [\eta_{a,\bar{w}}] = j >0$, then since $\eta_{a,\bar{w}}$ is problematic, this means that the classes $[\eta_{a,\bar{u_1}}], \ldots [\eta_{a,\bar{u_j}}]$ are among $[\phi_i]$, where $\eta_{a,\bar{u_k}}$ is the formula obtained from $\eta_{a,\bar{w}}$ by removing the outermost $k$ quantifiers. Moreover, $[\eta_{a,\bar{u_j}}]$ is a minimal class. By construction, the formula $\eta_{a,\bar{u_j}}$ is satisfied under all assignments. Then using compositional conditions, we can check by induction that all the other formulae $\eta_{a,\bar{u_j}}$ are satisfied under all assignments. The proof for $\eta_{a,\bar{w'}}$ is analogous, but now we consider the formulae $\eta_{a,\bar{u'_k}}$ obtained by removing the outermost $k$ quantifiers from $\eta_{a, \bar{w'}}$. By construction, the minimal formula $\eta{a,\bar{u'_j}}$ is not satisfied under any assignment and we check that the same holds for all $\eta_{a,\bar{u'_k}}$ by induction using the compositional conditions.  

The set $S'$ satisfies all the instances of the compositionality scheme from $\Theta_0$ by construction and the compositionality of $S'$. Note that there is no conflict between defining $S_0$ to simply contain $S$ and further extending it by compositional conditions, since compositional clauses determine uniquely the behaviour of $S$ on a given formula given the behaviour on its subformulae. 

The quantifier correctness scheme is handled similarly. We have to check that there is no conflict between the definition of $S_0$ and how it was extended in the further steps. If $Q \bar{v} \phi$ is in the domain of $S$ and $\phi$ is unproblematic, then the root of $Q \bar{v} \phi$ is a subformula of $\phi$ (possibly not proper) and by assumption it is in the domain of $S$. Therefore, we can check by induction on rank that $S'$ is defined both on $\phi$ and $Q \bar{v} \phi$ by applying the quantifier correctness clause to a formula in the domain of $S$. (The assumption that $T$ is defined exactly on formulae of depth $\leq b$ guarantees that the definition of the root makes sense).

Regularity holds for $S_0$ by the assumption on $S$ in the proof that $\mathscr{C}$ is unbounded. In the case where $(N,P) = (M,T)$, we use the assumption that $T$ is regular and the fact that the similarity classes of formulae $\eta_{a,\bar{v}}$ are singletons (by definition of these specific formulae). Then we show by induction that the regularity holds for all other sets $S_j$. First, observe that for every class $[\phi]$ or rank $j+1$, the predicate $S_{j+1}$ is defined using the same condition  for every formula $\phi' \in [\phi]$, which follows by definition of syntactic similarity (all similar formulae have the same syntactic tree) and the fact that either all formulae in $[\phi]$ are problematic or none is (which, in turn, depends on the definition of $\eta_a$ and the assumption that the domain of  $P$ includes exactly the formulae of depth $b$). Then the claim follows, since applying the same compositional or quantifier correctness clause to all formulae in a class $[\phi]$ preserves regularity. This concludes the proof.
\end{proof}

The proof of Lemma \ref{lem_quantifier_correcntess_EV} will still be valid if we ignore the mention of the formulae $\eta_{\bar{u}}$ and require that the quantifier correctness holds for every formula.
\begin{corollary} 
Let $M \models \PA$ be countable and recursively saturated. Then there exists a regular full satisfaction class $S \subset M^2$ such that every formula $\phi \in \form_{\LPA}(M)$ satisfies the quantifier correctness.
\end{corollary}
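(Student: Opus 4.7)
The plan is to adapt the proof of Lemma \ref{lem_quantifier_correcntess_EV}, dropping the mechanism that engineered failures of quantifier correctness at the formulae $\eta_a$. The author indicates that the same argument goes through verbatim under this simplification, so the bulk of the work is already done: the task is to identify the appropriate Enayat--Visser class and to verify that the original construction still passes through.

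Concretely, I would fix the EV class $\mathscr{C}$ consisting of pairs $(N,S)$ where $N \succeq M$ is a model of $\PA$ and $S \subset N^2$ is a regular satisfaction class (in the sense of Definition \ref{def_satisfaction_class}) with the technical closure condition that every quantifier block in $\dom(S)$ already has its root (the first subformula not beginning with the same quantifier) in $\dom(S)$, and, crucially, for which the quantifier correctness of Definition \ref{defi_quantifier correctness} holds for \emph{every} $\phi \in \dom(S)$. Closure under countable chains is immediate because compositionality, regularity, and quantifier correctness are universally-quantified conditions on $S$ and on pairs of formulae in $\dom(S)$; taking unions preserves all of them, and the domain closure condition transfers trivially. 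Nonemptiness is witnessed by $(M,\emptyset)$, the empty set being vacuously a regular satisfaction class in our sense.

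The unboundedness step is handled by the compactness argument from the proof of Lemma \ref{lem_quantifier_correcntess_EV}. Given $(N,S) \in \mathscr{C}$, I would consider the theory $\Theta$ whose axioms are: the elementary diagram of $(N,S)$ with $S$ renamed to a fresh symbol $S'$, the compatibility scheme $S(x,y) \to S'(x,y)$, the regularity scheme for $S'$, the full compositionality scheme $\Comp(\phi)$ for $\phi \in \form_{\LPA}(N)$, and the quantifier-correctness scheme $S'(Q\bar v \phi,\alpha) \equiv Q\beta \sim_{\bar v}\alpha\ S'(\phi,\beta)$ for every $\phi \in \form_{\LPA}(N)$, $\bar v \in \VarSeq(N)$ and $Q \in \{\forall,\exists\}$. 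To verify that any finite subtheory $\Theta_0 \subseteq \Theta$ is satisfied in $N$, I would enumerate the syntactic-similarity classes of the finitely many formulae appearing in $\Theta_0$ together with their roots, order these classes by the relation $\lhd$ generated by direct subformulae and by minimal quantifier jumps, and inductively define $S_0 = S$ and $S_{j+1}$ as the extension obtained by appending, for each class of rank $j+1$, the valuations forced by either the compositional clause or the quantifier-correctness clause. Dropping the $\eta_a$ bookkeeping simply removes the "unproblematic" side condition, and all compositional, regularity and quantifier-correctness verifications go through unchanged.

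The main obstacle I anticipate is the regularity check across the inductive steps, since syntactically similar formulae must receive the same decision at every stage. This is secured exactly as in the original proof: at each rank the extending clause depends only on the similarity class of the formula, and similar formulae share the same syntactic tree, so the invariant propagates by induction on rank. Once $\mathscr{C}$ is verified to be an EV class, Theorem \ref{th_enayat_visser_lemma} yields an elementary extension $M' \succeq M$ carrying a regular full satisfaction class with quantifier correctness for every formula. Since the existence of such a class is a recursive theory in $\LPA \cup \{S'\}$ consistent with $\ElDiag(M)$, Theorem \ref{th_recursive_saturated_resplendent} (applied to the countable recursively saturated $M$) produces such a class on $M$ itself, as required.
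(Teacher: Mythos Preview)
Your proposal is correct and follows exactly the approach the paper indicates: strip the $\eta_a$ bookkeeping from the proof of Lemma \ref{lem_quantifier_correcntess_EV}, verify that the resulting class is EV, apply Theorem \ref{th_enayat_visser_lemma}, and then invoke resplendence to descend back to $M$. One small wording slip: in the theory $\Theta$ you should take the elementary diagram of $N$ (with $S'$ a \emph{fresh} predicate and $S$ handled only through the compatibility scheme), not the diagram of $(N,S)$ with $S$ renamed to $S'$, since the latter would freeze the negative $S$-facts and block any genuine extension; but your subsequent description of the compactness argument makes clear you intend the former.
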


We can formulate quantifier correctness as a pair of axioms for the compositional truth in a natural way as follows:
\begin{displaymath}
\forall \phi \in \form_{\LPA} \forall \tuple{v_1, \ldots, v_c} \in \VarSeq \ \Big(T \exists v_1 \ldots \exists v_c \phi \equiv \exists \alpha \in \Asn(\phi) T\phi[\alpha] \Big).
\end{displaymath}
\begin{displaymath}
\forall \phi \in \form_{\LPA} \forall \tuple{v_1, \ldots, v_c} \in \VarSeq \ \Big(T \forall v_1 \ldots \forall v_c \phi \equiv \forall \alpha \in \Asn(\phi) T\phi[\alpha] \Big).
\end{displaymath}
Then by the previous observation, we conclude that the following holds:
\begin{corollary} \label{cor_quantifier_correctness_conservative}
The theory $\CT^- + $ "the quantifier correctness holds for all formulae"  is conservative over $\PA$. 
\end{corollary}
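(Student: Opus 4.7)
The plan is to reduce the problem to the preceding unnumbered corollary by translating the satisfaction-class statement into a truth-predicate statement via Proposition \ref{prop_satisfaction_vs_truth}. To obtain conservativity it suffices to show that every countable recursively saturated $M \models \PA$ expands to a model of $\CT^-$ together with the two quantifier-correctness axioms (call them QC); then, given an arithmetical $\sigma$ with $\PA \not\vdash \sigma$, one picks a countable recursively saturated $M \models \PA + \neg \sigma$ (such a model exists since every consistent extension of $\PA$ in a countable language admits one), expands it to a model of $\CT^- +$ QC, and reads off $\CT^- + \textnormal{QC} \not\vdash \sigma$.

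Fix therefore such an $M$, and let $S \subset M^2$ be the regular full satisfaction class supplied by the preceding corollary, for which the quantifier correctness property holds for every $\phi \in \form_{\LPA}(M)$. Proposition \ref{prop_satisfaction_vs_truth} delivers the truth predicate $T := \set{\psi \in \Sent_{\LPA}(M)}{(\psi,\emptyset) \in S}$ with $(M,T) \models \CT^-$, so only the QC axioms need verification. Given a sentence $\exists v_1 \ldots \exists v_c \phi$, the definition of $T$ together with quantifier correctness for $S$ yield
\begin{displaymath}
T(\exists \bar{v}\, \phi) \equiv (\exists \bar v\, \phi, \emptyset) \in S \equiv \exists \beta \sim_{\bar v} \emptyset\ (\phi,\beta) \in S,
\end{displaymath}
and the last condition unfolds to $\exists \alpha \in \Asn(\phi)\ (\phi,\alpha) \in S$. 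The remaining step is to identify this with the axiom's right-hand side $\exists \alpha \in \Asn(\phi)\, T\phi[\alpha]$, which I would do by invoking regularity: the pairs $(\phi,\alpha)$ and $(\phi[\alpha],\emptyset)$ are extensionally equivalent, witnessed by the common template $\phi$ and the identical closed-term sequences $\bar s = \bar t = \tuple{\num{\alpha(v_1)}, \ldots, \num{\alpha(v_c)}}$, so $(\phi,\alpha) \in S$ iff $(\phi[\alpha],\emptyset) \in S$ iff $T\phi[\alpha]$. The universal axiom is argued symmetrically.

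The extensional-equivalence step is the only nontrivial link in the chain, and it is not really an obstacle: it is exactly what the regularity condition of Definition \ref{defi_regular_satisfaction} was built to handle, ensuring that $S$ is insensitive to whether free variables are filled by an external assignment or by literal numeral substitution. Every other step is either definitional or immediate from the preceding corollary, so no genuine difficulty arises.
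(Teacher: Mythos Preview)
Your proposal is correct and follows essentially the same route as the paper, which simply says the corollary holds ``by the previous observation'': you have spelled out the passage from the satisfaction-class statement to the truth-predicate statement via Proposition~\ref{prop_satisfaction_vs_truth} and the standard conservativity-via-expansion argument that the paper leaves implicit. The extensional-equivalence step you flag is indeed trivial here, since $\phi[\alpha]$ and $(\phi[\alpha])[\emptyset]$ are literally the same sentence.
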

The above two corollaries were essentially proved by Enayat and Visser as Theorem 6.1 in \cite{enayat_visser_long}.\footnote{Note that the formulation in \cite{enayat_visser_long} includes a claim that we can elementarily extend an arbitrary model $M \models \PA$ to a one with a satisfaction class which is disjunctively correct. Now we know that this part of the theorem is false. The crucial gap is that if $\phi \in M$ is a disjunction over a nonstandard set of formulae, it may have new disjuncts when considered in an elementary extension of $M$. The quantifier correctness part (called by the authors "existential correctness") is essentially proved with the same argument as the one presented here. However, the reader should be aware that certain, small and harmless, gaps can be found in the original presentation which originate from ignoring the possibility that quantifier blocks from a model $M$ may have new prefixes when considered in its elementary extension.} 

\section*{Acknowledgements}
This research was supported by an NCN MAESTRO grant 2019/34/A/HS1/00399 "Epistemic and Semantic Commitments of Foundational Theories."


\begin{thebibliography}{10}
	
	\bibitem{cies_ksiazka}
	Cezary Cieśliński.
	\newblock {\em The Epistemic Lightness of Truth. {D}eflationism and its Logic}.
	\newblock Cambridge University Press, 2017.
	
	\bibitem{EnayatPakhomov}
	Ali Enayat and Fedor Pakhomov.
	\newblock Truth, disjunction, and induction.
	\newblock {\em Archive for Mathematical Logic}, 58:753--766, 2019.
	
	\bibitem{enayat_visser_long}
	Ali Enayat and Albert Visser.
	\newblock Full satisfaction classes in a general setting, 2012.
	
	\bibitem{enayatvisser2}
	Ali Enayat and Albert Visser.
	\newblock New constructions of satisfaction classes.
	\newblock In Theodora Achourioti, Henri Galinon, Jos\'e Mart\'inez~Fern\'andez,
	and Kentaro Fujimoto, editors, {\em Unifying the Philosophy of Truth}, pages
	321--325. {S}pringer-{V}erlag, 2015.
	
	\bibitem{hajekpudlak}
	Petr H\'ajek and Pavel Pudl\'ak.
	\newblock {\em Metamathematics of First-Order Arithmetic}.
	\newblock Springer-Verlag, 1993.
	
	\bibitem{halbach}
	Volker Halbach.
	\newblock {\em Axiomatic Theories of Truth}.
	\newblock Cambridge University Press, 2011.
	
	\bibitem{HamkinsYang}
	Joel~David Hamkins and Ruizhi Yang.
	\newblock Satisfaction is not absolute.
	\newblock {\em Review of Symbolic Logic}, 7(1):1--34, 2014.
	
	\bibitem{kaufmann}
	Matt Kaufmann.
	\newblock A rather classless model.
	\newblock {\em Proceedings of American Mathematical Society}, 62(2):330--333,
	1977.
	
	\bibitem{kaye}
	Richard Kaye.
	\newblock {\em Models of {P}eano {A}rithmetic}.
	\newblock Oxford{:} Clarendon Press, 1991.
	
	\bibitem{kossak_note}
	Roman Kossak.
	\newblock A note on satisfaction classes.
	\newblock {\em Notre Dame Journal for Formal Logic}, 25(1):1--8, 1985.
	
	\bibitem{kossakschmerl_satisfaction}
	Roman Kossak and James Schmerl.
	\newblock Minimal satisfaction classes with an application to rigid models of
	{P}eano arithmetic.
	\newblock {\em Notre Dame Journal of Formal Logic}, 32:392--398, 1991.
	
	\bibitem{WcisloKossak}
	Roman Kossak and Bartosz Wcisło.
	\newblock Disjunctions with stopping condition.
	\newblock {\em To appear in {T}he Bulletin of Symbolic Logic}.
	
	\bibitem{kkl}
	Henryk Kotlarski, Stanisław Krajewski, and Alistair Lachlan.
	\newblock Construction of satisfaction classes for nonstandard models.
	\newblock {\em Canadian Mathematical Bulletin}, 24:283--93, 1981.
	
	\bibitem{lachlan}
	Alistair~H. Lachlan.
	\newblock Full satisfaction classes and recursive saturation.
	\newblock {\em Canadian Mathmematical Bulletin}, 24:295--297, 1981.
	
	\bibitem{robinson}
	Abraham Robinson.
	\newblock On languages based on non-standard arithmetic.
	\newblock {\em Nagoya Mathematical Journal}, 22:83--107, 1963.
	
	\bibitem{schmerl_kernels}
	James Schmerl.
	\newblock Kernels, truth, and satisfaction.
	\newblock {\em Bulletin of the Polish Academy of Sciences}, 67(1):31--35, 2019.
	
	\bibitem{smith}
	Stuart~T. Smith.
	\newblock Nonstandard definability.
	\newblock {\em Annals of Pure and Applied Logic}, 42(1):21--43, 1989.
	
	\bibitem{wilmers}
	George Wilmers.
	\newblock Some problems in set theory{:} non-standard models and their
	application to model theory., 1975.
	
\end{thebibliography}
\end{document}